\def\dj{d\kern-0.4em\char"16\kern-0.1em}
\def\Dj{\mbox{\raise0.3ex\hbox{-}\kern-0.4em D}}
\newtheorem{theorem}{Theorem}[section]
\newtheorem{lemma}[theorem]{Lemma}
\newtheorem{problem}[theorem]{Problem}
\renewcommand{\@dotsep}{10000}
\newenvironment{proof}
{\noindent
{\it Proof.}}
{\hspace{\stretch{1}}%
$\Box$}
\newcounter{primer}[section]
\DeclareMathOperator{\Aut}{Aut}
\DeclareMathOperator{\Z}{Z}
\tikzset{my loop/.style =  {to path={
  \pgfextra{}
  [looseness=4,min distance=5mm]
  \tikz@to@curve@path},font=\sffamily\small
  }}  
\newcommand{\myitem}[1]{%
\item[#1]\protected@edef\@currentlabel{#1}%
}
\begin{document}

%\noindent{23. februar 2021.}\hfill v6.02
\thispagestyle{empty}
\begin{center}
\Large{Power graphs of all nilpotent groups}
\vspace{3mm}

\normalsize{S.H. Jafari, S. Zahirovi\' c}
\end{center}

\begin{abstract}
The directed power graph $\vec{\mathcal G}(\mathbf G)$ of a group $\mathbf G$ is the simple digraph with vertex set $G$ such that $x\rightarrow y$ if $y$ is a power of $x$. The power graph $\mathcal G(\mathbf G)$ of the group $\mathbf G$ is the underlying simple graph.

In this paper, we prove that Pr\" ufer group is the only nilpotent group whose power graph does not determine the directed power graph up to isomorphism. Also, we present a group $\mathbf G$ with quasicyclic torsion subgroup that is determined by its power graph up to isomorphism, i.e. such that $\mathcal G(\mathbf H)\cong\mathcal G(\mathbf G)$ implies $\mathbf H\cong \mathbf G$ for any group $\mathbf H$.
\end{abstract}

\section{Introduction} 

The directed power graph $\vec{\mathcal G}(\mathbf G)$ of a group $\mathbf G$ is the simple directed graph with vertex set $G$ in which $x\rightarrow y$ if $y$ is a power of $x$. The power graph $\mathcal G(\mathbf G)$ of a group $\mathbf G$ is the simple graph whose vertex set is $G$ and whose vertices $x$ and $y$ are adjacent if one of them is a power of the other. The directed power graph was introduced by Kelarev and Quinn \cite{kelarev-quinn-1}, and the power graph was first studied by Chakrabarty, Ghosh and Sen \cite{cakrabarti}. Many authors have actively studied the power graph recently (see e.g. \cite{cameron-ghosh, cameron, cameron-guerra-jurina, aalipour-et-al, shitov, kelarev-quinn-2, kelarev-3, kelarev-4, cameron-jafari, zahirovic-1, zahirovic-2,cameron-manna-mehatari, ma-feng-wang-1, doostabadi-ghouchan, jafari-1, jafari-2, jafari-chattopadhyay, pourghobadi-jafari}). The reader may refer to survey papers \cite{survey-kelarev} and \cite{survey-cameron}, as well as \cite{cameron-graphs-defined-on-groups}, which concerns various graphs defined on groups and comparisons between them.

Many combinatorial and algebraic properties of the power graph were studied. Aalipour et al. \cite{aalipour-et-al} proved that every group of finite exponent has perfect power graph. Shitov \cite{shitov} proved that the chromatic number of every power-associate groupoid is at most countable. Cameron and Jafari \cite{cameron-jafari} characterized all groups whose power graphs have finite independence numbers, and they showed that the power graph of every such group has equal independence number and clique cover number. In \cite{cameron-manna-mehatari}, the authors determined all groups whose power graphs are threshold or split graphs.

The following question regarding the power graph has also received significant attention: for which groups does the power graph determine the directed power graph. Cameron and Ghosh \cite{cameron-ghosh} proved that, given that finite groups $\mathbf G$ and $\mathbf H$ are abelian, $\mathcal G(\mathbf G)\cong\mathcal G(\mathbf H)$ implies $\mathbf G\cong\mathbf H$, and therefore, it also implies that $\vec{\mathcal G}(\mathbf G)\cong\vec{\mathcal G}(\mathbf H)$. In a subsequent paper, Cameron \cite{cameron} showed that $\mathcal G(\mathbf G)\cong\mathcal G(\mathbf H)$ implies $\vec{\mathcal G}(\mathbf G)\cong\vec{\mathcal G}(\mathbf H)$ for any two finite groups $\mathbf G$ and $\mathbf H$. Since the power graph of a Pr\" ufer group is a complete graph, it is not true that the power graph of a group always determines the directed power graph. However, all groups whose power graphs determine their directed power graphs have not been classified. In \cite{cameron-guerra-jurina}, the authors have shown that, if both $\mathbf G$ and $\mathbf H$ are torsion-free groups of nilpotency class $2$, then $\mathcal G(\mathbf G)\cong\mathcal G(\mathbf H)$ implies $\vec{\mathcal G}(\mathbf G)\cong\vec{\mathcal G}(\mathbf H)$. In later research on this topic, quasicyclic subgroups have played an important role. We say that a quasicyclic subgroup $\mathbf C_{p^\infty}$ of a group $\mathbf G$ is intersection-free if $\mathbf C_{p^\infty}\cap\mathbf C$ is trivial for all cyclic subgroups $\mathbf C\leq\mathbf G$ such that $\mathbf C\not\leq\mathbf C_{p^\infty}$. Zahirovi\' c \cite{zahirovic-1,zahirovic-2} showed that, for group $\mathbf G$ and $\mathbf H$ such that $\mathbf G$ does not have an intersection-free quasicyclic subgroup, $\mathcal G(\mathbf G)\cong\mathcal G(\mathbf H)$ implies $\vec{\mathcal G}(\mathbf G)\cong\vec{\mathcal G}(\mathbf H)$. Therefore, the power graph of any torsion-free groups determines the directed power graph. In this paper, we prove that, given that both $\mathbf G$ and $\mathbf H$ are non-quasicyclic nilpotent groups, $\mathcal G(\mathbf G)\cong\mathcal G(\mathbf H)$ implies $\vec{\mathcal G}(\mathbf G)\cong\vec{\mathcal G}(\mathbf H)$. Further, for a prime number $p$ and any group $\mathbf G$, we show that $\mathcal G(\mathbf G)\cong\mathcal G(\mathbb Q\times\mathbf C_{p^\infty})$ only if $\mathbf G\cong \mathbb Q\times\mathbf C_{p^\infty}$. Therefore, $\mathbb Q\times\mathbf C_{p^\infty}$ is a group with an intersection-free quasicyclic subgroup whose power graph determines the directed power graph. Thus, we show in this paper that the sufficient condition from \cite[Theorem 21]{zahirovic-2} is not a necessary one as well.

As stated above, by the main result of this paper, for nilpotent groups $\mathbf G$ and $\mathbf H$ we have the following.
\[\begin{array}{r}
       \mathcal G(\mathbf G)\cong\mathcal G(\mathbf H) \\
       \mathbf G\text{ is not a Pr\" ufer group}
        \end{array}\Bigg\} \Rightarrow\vec{\mathcal G}(\mathbf G)\cong\vec{\mathcal G}(\mathbf H)\]
Naturally, we ask ourselves whether the same implication holds even without the assumption that both groups $\mathbf G$ and $\mathbf H$ are nilpotent. Therefore, we mention the following problem.

\begin{problem}
Let $\mathbf G$ be a non-quasicyclic nilpotent group, and let $\mathbf H$ be any group such that $\mathcal G(\mathbf H)\cong\mathcal G(\mathbf G)$. Do $\mathbf G$ and $\mathbf H$ have isomorhic directed power graphs as well?
\end{problem}

\section{Basic notions and notations}

In this paper, graphs are denoted by capital letters of Greek alphabet. By $\Gamma(V,E)$, or shortly by $\Gamma$, we denote the graph whose vertex set and edge set are $V$ and $E$, respectively. A graph $\Delta(V_2,E_2)$ is a subgraph of $\Gamma(V,E)$ if $V_2\subseteq V$ and $E_2\subseteq E$. Furthermore, $\Delta$ is an induced subgraph of $\Gamma$ if $\Delta$ contains no pair of nonadjacent vertices that are adjacent in $\Gamma$; we also say that that $\Delta$ is a subgraph of $\Gamma$ induced by $V_2$. The induced subgraph of $\Gamma$ by a set of vertices $W$ is denoted by $\Gamma[W]$. For vertices $x$ and $y$ of $\Gamma$, $x\sim y$ denotes that $x$ and $y$ are adjacent in $\Gamma$. Also, for a digraph $\vec\Gamma$ and its vertices $x$ and $y$, we denote by $x\rightarrow y$ that there is an arc from $x$ to $y$. A clique is a set of vertices which induces a complete subgraph.

Throughout this paper, algebraic structures are denoted by bold capital letters, and we denote the universe of an algebra by the respective roman capital letter. We say that $x$ is an $n${\bf -th root} of an element $y$ of a group if $y=x^n$. If $n$ is a prime number, then we say that $x$ is a {\bf prime root} of $y$. If $P=\prod_{i=1}^n A_i$ for some sets $A_i$, then $\pi_i$ denotes the respective projection map, i.e. $\pi_i(a_1,a_2,\dots,a_n)=a_i$. Also, for any $S\subseteq P$, $\pi_i(S)=\{\pi(\overline a)\mid \overline a\in S\}$. We say that a quasicyclic subgroup $\mathbf C_{p^\infty}$ of $\mathbf G$ is {\bf intersection-free} if, for every cyclic subgroup $\mathbf C$ of $\mathbf G$, $\mathbf C\cap\mathbf C_{p^\infty}$ is nontrivial only if $\mathbf C\leq\mathbf C_{p^\infty}$.

Let $\mathbf G$ be a group. The {\bf directed power graph} of $\mathbf G$ is the graph $\vec{\mathcal G}(\mathbf G)$ whose vertex set is $G$ and such that, for any pair of elements $x,y\in G$, there is an arc from $x$ to $y$ if there is nonzero integer $n$ such that $y^n=x$. By $x\rightarrow y$, we denote that there is an arc from $x$ to $y$ in the directed power graph of $\mathbf G$. The {\bf power graph} $\mathcal G(\mathbf G)$ of the group $\mathbf G$ is defined as the underlying simple graph of $\vec{\mathcal G}(\mathbf G)$. For elements $x$ and $y$ of $\mathbf G$, $x\sim y$ denotes that $x$ and $y$ are adjacent in $\mathcal G(\mathbf G)$.

The closed neighborhood $\overline N(x)$ of a vertex $x$ of a graph $\Gamma$ is the set which included $x$ and all vertices of $\Gamma$ adjacent to $x$. For a group $\mathbf G$ and for $x,y\in G$, we write $x\equiv y$ if $\overline N(x)=\overline N(y)$ in $\mathcal G(\mathbf G)$. Also, for an element $x$ of a group $\mathbf G$, $O(x)$ denotes the set $\big\{x^n\mid n\in\mathbb Z\setminus\{-1,0,1\}\big\}$.

In this paper, for a group $\mathbf G$, $G_{<\infty}$ and $G_\infty$ denote the set of all finite-order elements of $\mathbf G$ and the set of all infinite-order elements of $\mathbf G$, respectively. The {\bf finite-order segment} of $\mathcal G(\mathbf G)$ is the subgraph of $\mathcal G(\mathbf G)$ induced by $G_{<\infty}$, and the {\bf infinite-order segment} of $\mathcal G(\mathbf G)$ is the subgraph of $\mathcal G(\mathbf G)$ induced $G_{\infty}$. The finite-order segment and the infinite-order segment of the directed power graph of a group are defined similarly. Since no element of finite order is adjacent to an element of infinite order in $\mathcal G(\mathbf G)$, and since the identity element is adjacent to all elements of finite order, the finite-order segment is a connected component of $\mathcal G(\mathbf G)$. The infinite-order segment, however, may not be a connected component of the power graph of a group, but it is a union of connected components.

\section{The power graph and the directed power graph of nilpotent group}\label{power graph determines the directed power graph for almost all abelian groups}

In this section, we prove that any pair of non-quasicyclic nilpotent groups with isomorphic power graphs have isomorphic directed power graphs, too. We shall need the following well-known result on nilpotent groups (see \cite[5.2.7]{robinson}).

\begin{theorem}\label{about all nilpotent groups}
Let $\mathbf G$ be a nilpotent group. Then $\mathbf G_{<\infty}$ is a normal subgroup of $\mathbf G$ such that $\mathbf G/\mathbf G_{<\infty}$ is torsion-free, and $\mathbf G_{<\infty}$ is the direct sum of the unique maximal $p$-subgroups of $\mathbf G$.
\end{theorem}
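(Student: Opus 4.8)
The plan is to reduce all three assertions to finitely generated subgroups of $\mathbf{G}$, where one can use that a finitely generated nilpotent torsion group is finite. Concretely, I would isolate the following lemma: \emph{if a nilpotent group is generated by finitely many elements of $p$-power order for a fixed prime $p$, then it is a finite $p$-group; and if it is generated by finitely many torsion elements (of possibly different prime-power orders), then it is finite.} Granting this, the three claims of the theorem drop out by looking at two-generated subgroups.

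To prove the lemma I would induct on the nilpotency class $c$. For $c\le 1$ the group is finitely generated abelian and generated by torsion elements, hence finite — and a $p$-group in the first case, since in an abelian group the order of a product divides the $\lcm$ of the orders. For $c\ge 2$, note $\gamma_c(\mathbf{G})\le Z(\mathbf{G})$, so $\bar{\mathbf{G}}=\mathbf{G}/\gamma_c(\mathbf{G})$ is finitely generated nilpotent of class $<c$ whose generators are still torsion (resp. $p$-)elements; by induction $\bar{\mathbf{G}}$ is finite (resp. a finite $p$-group). It remains to control $\gamma_c(\mathbf{G})$. Since $\gamma_{c+1}(\mathbf{G})=1$, the subgroup $\gamma_c(\mathbf{G})$ is central, hence abelian, and it is generated by the finitely many left-normed commutators $[x_{i_1},\dots,x_{i_c}]$ in the generators $x_1,\dots,x_n$ (no conjugates need be adjoined, by centrality); so it is finitely generated abelian. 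Moreover the weight-$c$ commutator is linear in its first slot modulo $\gamma_{c+1}(\mathbf{G})=1$, giving $[x_{i_1}^{m},x_{i_2},\dots,x_{i_c}]=[x_{i_1},x_{i_2},\dots,x_{i_c}]^{m}$; taking $m$ to be the order of $x_{i_1}$ shows each generating commutator has finite order dividing $m$. Hence $\gamma_c(\mathbf{G})$ is finitely generated abelian and torsion, so finite (resp. a finite $p$-group, each $m$ being a $p$-power), and $\mathbf{G}$ is an extension of a finite group by a finite group, completing the induction.

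Granting the lemma, fix a prime $p$ and let $T_p$ be the set of elements of $\mathbf{G}$ of $p$-power order. For $x,y\in T_p$ the subgroup $\langle x,y\rangle$ is finitely generated nilpotent and generated by $p$-elements, so a finite $p$-group; thus $T_p$ is a subgroup, manifestly normal, and it is the unique maximal $p$-subgroup since every $p$-subgroup is contained in it. Likewise $\langle x,y\rangle$ is finite for $x,y\in G_{<\infty}$, so $\mathbf{G}_{<\infty}$ is a normal subgroup; and $x^n\in G_{<\infty}$ forces $x\in G_{<\infty}$, so $\mathbf{G}/\mathbf{G}_{<\infty}$ is torsion-free. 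For the decomposition: each $x\in G_{<\infty}$ is a product of commuting powers of $x$ of prime-power order, so $\mathbf{G}_{<\infty}=\langle\bigcup_p T_p\rangle$; for $p\ne q$ and $x\in T_p$, $y\in T_q$, the finite nilpotent group $\langle x,y\rangle$ is the direct product of its Sylow subgroups, so $x$ and $y$ commute; and an element of $T_p\cap\langle T_q: q\ne p\rangle$ has order simultaneously a power of $p$ and coprime to $p$, hence is trivial. These three facts say precisely that $\mathbf{G}_{<\infty}=\bigoplus_p T_p$.

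The main obstacle is the inductive step on $\gamma_c(\mathbf{G})$: one needs both that it is finitely generated — which rests on its centrality, so the generating commutators already commute and no conjugates are needed — and that it is torsion — which rests on the multilinearity of the weight-$c$ commutator modulo $\gamma_{c+1}$, i.e.\ the identity $[x^m,\dots]=[x,\dots]^m$. Everything else (passing to two-generated subgroups, the primary decomposition of a single torsion element, and the Sylow decomposition of a finite nilpotent group) is routine. I would also remark that this is the classical structure theorem for nilpotent groups, so in the paper itself it is legitimate simply to cite it from Robinson as is done.
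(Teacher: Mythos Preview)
Your argument is correct. The paper itself does not prove this theorem: it is quoted as a well-known structural fact about nilpotent groups and simply referred to \cite[5.2.7]{robinson}. You have supplied a complete and self-contained proof, and indeed one that follows the classical route taken in Robinson: reduce to finitely generated subgroups, show by induction on the nilpotency class that a finitely generated nilpotent group generated by torsion (resp.\ $p$-)elements is finite (resp.\ a finite $p$-group), using that $\gamma_c(\mathbf G)$ is central and that the weight-$c$ commutator is multilinear when $\gamma_{c+1}(\mathbf G)=1$; then deduce the three assertions by looking at two-generated subgroups. Your treatment of the direct-sum decomposition via pairwise commuting Sylow pieces of finite nilpotent subgroups is also standard and correct. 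As you yourself note in the final sentence, in the context of this paper a bare citation is entirely appropriate, so there is nothing to add or correct.
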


Let us show first that, if the torsion subgroup of a nilpotent group is quasicyclic, then it is a central subgroup.

\begin{lemma}\label{all finite is central}
Let $\mathbf G$ be a nilpotent group, and let $\mathbf G_{<\infty}\cong\mathbf C_{p^\infty}$ for some prime number $p$. Then $\mathbf G_{<\infty}$ is a central subgroup of $\mathbf G$. 
\end{lemma}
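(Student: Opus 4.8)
The plan is to route the conjugation action of $\mathbf G$ on its torsion subgroup through the (abelian) endomorphism ring of the Pr\"ufer group and use nilpotency to force that action to be trivial. By Theorem~\ref{about all nilpotent groups}, $\mathbf G_{<\infty}$ is normal in $\mathbf G$, and since $\mathbf G_{<\infty}\cong\mathbf C_{p^\infty}$ it is abelian; hence conjugation defines a homomorphism $\varphi\colon\mathbf G\to\Aut(\mathbf G_{<\infty})$ with $\ker\varphi=\Cen_{\mathbf G}(\mathbf G_{<\infty})$. It therefore suffices to show $\varphi$ is trivial, i.e.\ that $\varphi(g)$ is the identity for every $g\in G$.

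Fix $g\in G$, write $\alpha=\varphi(g)$, and let $c$ be the nilpotency class of $\mathbf G$ (finite, and $\ge 1$ since $\mathbf G_{<\infty}\neq 1$). Because $\mathbf G_{<\infty}$ is abelian, $\operatorname{End}(\mathbf G_{<\infty})$ is a ring, so $\alpha-1$ and its powers make sense. The key step is the identity $(\alpha-1)^{c}=0$ in $\operatorname{End}(\mathbf G_{<\infty})$, obtained from the standard commutator calculus: for $x\in\mathbf G_{<\infty}$ the left-normed commutator $[x,\underbrace{g,\dots,g}_{c}]$ lies in $\gamma_{c+1}(\mathbf G)=1$ (as $[x,g]\in\gamma_2(\mathbf G)$, and bracketing once more with $g$ moves one step up the lower central series), while, because $\mathbf G_{<\infty}$ is normal, every partial commutator in this chain stays inside $\mathbf G_{<\infty}$, where — writing that subgroup additively — bracketing with $g$ is precisely the endomorphism $y\mapsto\alpha(y)-y=(\alpha-1)(y)$. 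Iterating gives $(\alpha-1)^{c}(x)=[x,g,\dots,g]=0$ for all $x$, which is the claimed identity. This paragraph contains the one point needing care: verifying that the iterated group commutator really coincides with $(\alpha-1)^{c}(x)$ after passing to additive notation, which rests on $\mathbf G_{<\infty}$ being simultaneously normal (the commutators do not leave it) and abelian (so $\operatorname{End}$, and hence $\alpha-1$, is available).

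To finish, I would invoke that $\operatorname{End}(\mathbf C_{p^\infty})\cong\mathbb Z_p$ is an integral domain and hence has no nonzero nilpotent element, so $(\alpha-1)^{c}=0$ forces $\alpha-1=0$, that is $\alpha=1$. (If one prefers not to quote the endomorphism ring, restrict $\alpha$ to the characteristic finite subgroups $\mathbf C_{p^{n}}\le\mathbf C_{p^\infty}$: there $(\alpha-1)^{c}=0$ says $(u_{n}-1)^{c}\equiv0\pmod{p^{n}}$ for the unit $u_n\in(\mathbb Z/p^n\mathbb Z)^\times$ representing $\alpha$, whence $u_{n}\equiv1\pmod{p^{\lceil n/c\rceil}}$; letting $n\to\infty$ shows $\alpha$ fixes $\mathbf C_{p^{m}}$ pointwise for every $m$, hence fixes all of $\mathbf C_{p^\infty}$.) Since $g$ was arbitrary, $\Cen_{\mathbf G}(\mathbf G_{<\infty})=\mathbf G$, so $\mathbf G_{<\infty}\le\Z(\mathbf G)$. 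Apart from the bookkeeping mentioned above, the real ``obstacle'' is only conceptual: one must not try to conclude by counting orders inside a single finite characteristic layer, since $\mathbf C_{p^\infty}$ is infinite; the point is that nilpotency makes $\alpha-1$ nilpotent in a torsion-free (indeed domain) endomorphism ring, which kills it outright.
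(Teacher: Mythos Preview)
Your proof is correct and takes a genuinely different route from the paper's. The paper fixes $a\in G_\infty$, sets $\mathbf K=\langle a\rangle\ltimes\mathbf G_{<\infty}$, and argues by contradiction: passing to $\mathbf L=\mathbf K/[\,[\mathbf K,\mathbf K],\mathbf K\,]$ (a class-$2$ quotient), it uses the class-$2$ identities $[x,y^n]=[x,y]^n$ to show that $\mathbf L'$ is an infinite $p$-group, hence $\mathbf L'\cong\mathbf C_{p^\infty}$; uniqueness of the quasicyclic subgroup then forces $\overline{\mathbf G_{<\infty}}=\mathbf L'\le\Z(\mathbf L)$, contradicting nonabelianness. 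Your argument instead linearises the problem: the iterated commutator $[x,g,\dots,g]$ becomes $(\alpha-1)^c(x)$ inside the abelian normal subgroup, nilpotency kills it, and the fact that $\operatorname{End}(\mathbf C_{p^\infty})\cong\mathbb Z_p$ is a domain finishes at once. Your approach is shorter, avoids the contradiction and the passage to a class-$2$ quotient, and generalises immediately to any normal abelian subgroup whose endomorphism ring is reduced; the paper's approach, on the other hand, is entirely self-contained and does not need to identify $\operatorname{End}(\mathbf C_{p^\infty})$ or invoke $p$-adic integers.
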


\begin{proof}
Let $a\in G_\infty$. Let us prove that $\mathbf K=\langle a, G_{<\infty}\rangle$ is abelian. Since $\mathbf G_{<\infty}\unlhd\mathbf G$, $\mathbf K=\langle a\rangle \mathbf G_{<\infty}$. Moreover, $\mathbf K=\langle a\rangle \ltimes\mathbf G_{<\infty}$. Since $\mathbf G$ is nilpotent, $\mathbf K$ is nilpotent as well.

Suppose now that $\mathbf K$ is not abelian, and let $\mathbf K_2=[\mathbf K,\mathbf K]$ and $\mathbf K_3=[\mathbf K_2,\mathbf K]$. Then $\mathbf K> \mathbf K_2> \mathbf K_3$, $\mathbf K_2$ is nontrivial, and $\mathbf K/\mathbf K_3$ is nonabelian. Let us denote $\mathbf K/\mathbf K_3$ by $\mathbf L$. Notice that $\mathbf L=\overline{\langle a\rangle}\ltimes\overline{\mathbf G_{<\infty}}$, where $\overline{\mathbf H}$ denotes $(\mathbf H\mathbf K_3)/\mathbf K_3$ for any $\mathbf H\leq\mathbf K$. Also, since $\mathbf L/\overline{\mathbf G_{<\infty}}$ is abelian, we have that $\mathbf L'\leq\overline{\mathbf G_{<\infty}}$. Furthermore, since $\mathbf L$ is nonabelian, $\overline{\mathbf G_{<\infty}}$ is nontrivial, and thus, $\mathbf G_{<\infty}\not\leq\mathbf K_3$. Therefore, by the second isomorphism theorem,
\[\overline{\mathbf G_{<\infty}}=\frac{\mathbf G_{<\infty}\mathbf K_3}{\mathbf K_3}\cong \frac{\mathbf G_{<\infty}}{\mathbf G_{<\infty}\cap\mathbf K_3}\neq 1.\]
Therefore, $\overline{\mathbf G_{<\infty}}\cong \mathbf C_{p^\infty}$. Note that $\mathbf L$ is nilpotent of class $2$. Also, we remind the reader of the commutator identity $[x,yz]=[x,z]z^{-1}[x,y]z$. Since $[\mathbf L,\mathbf L]$ is a central subgroup of $\mathbf L$, then $[x,yz]=[x,y][x,z]$, and therefore,
\[[x,y^n]=[x,y]^n\]
for every $x,y,z\in L$ and $n\in\mathbb N$, and similarly $[x^n,y]=[x,y]^n$. Therefore,
\begin{equation*}\label{the L'}
L'=\{ [b,y]\mid y\in \overline{G_{<\infty}} \},
\end{equation*}
where $b=aK_3$. Furthermore, $\mathbf L'$ is a $p$-group since
\[ [b,y]^{p^n}=[b,y^{p^n}]=[b,e]=e, \]
where $p^n$ is the order of $y\in\overline{G_{<\infty}}$. Let us show that $\mathbf L'$ is infinite, too. Since $\mathbf L$ is nonabelian, there is $y_0\in \overline{G_{<\infty}}$ such that $[b,y_0]\neq e$. Moreover, for every $n\in\mathbb N$, there is $y_n\in\overline{G_{<\infty}}$ such that $[b,y_n]^{p^n}=[b,y_n^{p^n}]=[b,y_0]$, i.e. $o(y_n)> p^n$. Therefore, $\mathbf L'$ is infinite, and $\mathbf L'\cong \mathbf C_{p^\infty}$. Now, since $\mathbf L=\overline{\langle a\rangle}\ltimes\overline{\mathbf G_{<\infty}}$ has the unique quasicyclic subgroup, it follows that $\overline{\mathbf G_{<\infty}}=\mathbf L'\leq\Z(\mathbf L)$, which is in contradiction with the assumption that $\mathbf L$ is not abelian. This proves the lemma.
\end{proof}

Now, we are ready to show the following two lemmas about numbers of prime roots of an infinite-order element. We will use those results in the proof of Lemma \ref{numbers of prime roots}, which is the key lemma for proving the main result of this section.

\begin{lemma}\label{p many p-th roots}
Let $\mathbf G$ be a nilpotent group, and let $\mathbf G_{<\infty}\cong\mathbf C_{p^\infty}$ for some prime number $p$. Then the number of $p$-th roots of any infinite-order element is either $p$ or $0$.
\end{lemma}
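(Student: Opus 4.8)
The plan is to transfer the count of $p$-th roots into the central quasicyclic subgroup $\mathbf G_{<\infty}$ by working modulo it. So let $a\in G_\infty$, and assume $a$ has at least one $p$-th root $b$ (otherwise the number of $p$-th roots is $0$ and there is nothing to prove); note that $b$ then has infinite order, since a torsion element has only torsion powers.

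First I would pass to the quotient $\overline{\mathbf G}=\mathbf G/\mathbf G_{<\infty}$, which by Theorem \ref{about all nilpotent groups} is a torsion-free nilpotent group. It is a classical fact that extraction of roots is unique in a torsion-free nilpotent group (see \cite{robinson}), so $x^p=y^p$ implies $x=y$ in $\overline{\mathbf G}$. Hence, if $c$ is any $p$-th root of $a$, then $(c\mathbf G_{<\infty})^p=a\mathbf G_{<\infty}=(b\mathbf G_{<\infty})^p$ forces $c\mathbf G_{<\infty}=b\mathbf G_{<\infty}$, i.e. $c=bz$ for some $z\in G_{<\infty}$.

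Next I would use Lemma \ref{all finite is central} to conclude $z\in\Z(\mathbf G)$, so that $a=c^p=(bz)^p=b^pz^p=az^p$, and therefore $z^p=e$. Conversely, for every $z\in\mathbf G_{<\infty}\cong\mathbf C_{p^\infty}$ with $z^p=e$ the element $bz$ is a $p$-th root of $a$, and the map $z\mapsto bz$ is injective. Since $\mathbf C_{p^\infty}$ contains exactly $p$ elements killed by $p$, it follows that $a$ has exactly $p$ many $p$-th roots.

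The argument is short; the only nonelementary ingredient is the uniqueness of roots in torsion-free nilpotent groups, and the one point that needs care is that $z$ must be \emph{central} for the expansion $(bz)^p=b^pz^p$ to be valid — which is precisely what Lemma \ref{all finite is central} supplies.
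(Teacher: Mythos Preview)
Your proof is correct and takes a genuinely different route from the paper's. The paper never passes to the quotient $\mathbf G/\mathbf G_{<\infty}$; instead it fixes two $p$-th roots $a,b$ of $u$, considers $\mathbf L=\langle a,b,G_{<\infty}\rangle$, and argues directly via the lower central series of $\mathbf L$ that $\mathbf L$ has nilpotency class at most~$2$. It then uses class-$2$ commutator identities (essentially $[x,y^n]=[x,y]^n$) to force $(a^{-1}b)^p\in G_{<\infty}$, hence $b\in a\,G_{<\infty}$, and finishes via Lemma~\ref{all finite is central} exactly as you do. Your argument replaces that hands-on central-series computation with a single appeal to the classical uniqueness of roots in torsion-free nilpotent groups applied to $\mathbf G/\mathbf G_{<\infty}$ (which is torsion-free by Theorem~\ref{about all nilpotent groups}); this is shorter and conceptually cleaner, at the cost of importing a result the paper effectively reproves in the special case it needs. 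Both proofs rely on Lemma~\ref{all finite is central} at the end to turn the coset statement $c\in b\,G_{<\infty}$ into an exact count via $(bz)^p=b^pz^p$.
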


\begin{proof}
Let $a$ be a $p$-th root of an infinite-order element $u\in\mathbf G$. Let us prove first that $\langle a, G_{<\infty}\rangle=\langle a\rangle\mathbf G_{<\infty}$ contains all $p$-th roots of $u$. Suppose that $u$ has a $p$-th root $b\not\in\langle a \rangle\mathbf G_{<\infty}$, and let $\mathbf L$ denote $\langle a, b, G_{<\infty}\rangle$. Since $\mathbf L$ is nilpotent, it has the lower central series
\[ \mathbf L=\mathbf L_1>\mathbf  L_2 > \cdots >\mathbf L_t>\mathbf L_{t+1}=1, \]
where $\mathbf L_{i+1}=[\mathbf L_i,\mathbf L]$ for all $i$.

In order to show that $\langle a\rangle\mathbf G_{<\infty}$ contains all $p$-th roots of $u$, we need to show first that $\mathbf L$ has nilpotency class at most $2$. So suppose that $t>2$, and let $x\in L_{t-1}$. Then $[x,a],[x,b]\in L_t\leq\Z(\mathbf L)$. Therefore, $[x,yz]=[x,z]z^{-1}[x,y]z=[x,z][x,y]$ for any $y,z\in L$, and thus,
\begin{align*}
&[x,a]^p=[x,a^p]=e\text{ and}\\
&[x,b]^p=[x,b^p]=e.
\end{align*}
Furthermore, since $L=\langle a,b,G_{<\infty}\rangle$, $[x,y]^p=e$ for all $y\in L$, which implies that the exponent of $\mathbf L_t$ is $p$. Therefore, $\mathbf L_t$ is the unique subgroup of order $p$ of $\mathbf G_{<\infty}$. Now, let us consider $\mathbf L/\mathbf L_t$. It's lower central series is
\[ \mathbf L/\mathbf L_t>\mathbf  L_2/\mathbf L_t > \cdots > \mathbf L_{t-1}/\mathbf L_t>1, \]
and similarly, we conclude that $\mathbf L_{t-1}/\mathbf L_t>1$ is the unique subgroup of order $p$ of $ \mathbf L/\mathbf L_t$. Therefore, $\mathbf L_{t-1}$ is the unique subgroup of $\mathbf G_{<\infty}$ of order $p^2$. It follows that $\mathbf L_t=[\mathbf L_{t-1},\mathbf L]=1$, which is a contradiction. Thus, $\mathbf L$ has nilpotency class at most $2$.

Now, since
\[ b[b,a]a^{-1}=a^{-1}b, \]
and since $[a,b]\in \mathbf L'\leq\Z(\mathbf L)$, for some $t\in\mathbb N$, we obtain
\[ (a^{-1}b)^p[b,a]^t=(a^{-1})^pb^p=(a^p)^{-1}b^p=u^{-1}u=e. \]
Therefore, since $[b,a]\in G_{<\infty}$, it follows that $b\in a G_{<\infty}\leq\langle a\rangle\mathbf G_{<\infty}$, which is a contradiction. This proves that $\langle a\rangle\mathbf G_{<\infty}$ contains all $p$-th roots of $u$. Moreover, by Lemma \ref{all finite is central}, $\langle a\rangle\mathbf G_{<\infty}=\langle a\rangle\times\mathbf G_{<\infty}$, and therefore, u has $p$ many $p$-th roots in $\langle a\rangle\times \mathbf G_{<\infty}$, which proves the lemma. 
\end{proof}

\begin{lemma}\label{one q-th root}
Let $p$ and $q$ be different prime numbers, let $\mathbf G$ be a nilpotent group such that $\mathbf G_{<\infty}\cong\mathbf C_{p^\infty}$. Then the number of $q$-th roots of any infinite-order element is at most $1$.
\end{lemma}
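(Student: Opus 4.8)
The plan is to exploit, exactly as in the proof of Lemma~\ref{p many p-th roots}, the interplay between the torsion-free quotient $\mathbf G/\mathbf G_{<\infty}$ and the central $p$-group $\mathbf G_{<\infty}$, but now taking advantage of the fact that $q$ is coprime to $p$. So suppose $a$ and $b$ are both $q$-th roots of an infinite-order element $u$, i.e. $a^q=b^q=u$; the goal is to deduce $a=b$.

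First I would pass to the quotient $\mathbf G/\mathbf G_{<\infty}$. By Theorem~\ref{about all nilpotent groups} this group is torsion-free and nilpotent, and it is well known (see, e.g., \cite{robinson}) that a torsion-free nilpotent group admits unique root extraction, i.e. $x^n=y^n$ implies $x=y$. (If one prefers not to quote this, it has a short inductive proof on the nilpotency class, using that the upper central factors of a torsion-free nilpotent group are again torsion-free --- the very same pattern the rest of the argument follows.) Since the images of $a$ and $b$ in $\mathbf G/\mathbf G_{<\infty}$ have the same $q$-th power, namely the image of $u$, they coincide; hence $c:=a^{-1}b$ lies in $\mathbf G_{<\infty}$, and since $\mathbf G_{<\infty}\cong\mathbf C_{p^\infty}$ the order of $c$ is a power of $p$.

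Next I would invoke Lemma~\ref{all finite is central}, which guarantees that $\mathbf G_{<\infty}$ is central; in particular $c$ commutes with $a$. Writing $b=ac$ then yields $u=b^q=(ac)^q=a^qc^q=uc^q$, so that $c^q=e$. As $c$ has $p$-power order and $q\neq p$, this forces $c=e$, that is $a=b$, which is what we wanted.

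The only step that is not routine is the appeal to uniqueness of roots in torsion-free nilpotent groups, and that is where I expect the care to be needed --- alternatively one may replace it by the class-$\leq 2$ reduction already used in the proof of Lemma~\ref{p many p-th roots}, carried out for $\mathbf L=\langle a,b,G_{<\infty}\rangle$ and then finished by the same coprimality computation inside $\langle a\rangle\times\mathbf G_{<\infty}$. It is also worth highlighting where $p\neq q$ is decisive and why the count differs from Lemma~\ref{p many p-th roots}: the $p$-th power map on $\mathbf C_{p^\infty}$ is $p$-to-one, which is what produced $p$ roots there, whereas the $q$-th power map on $\mathbf C_{p^\infty}$ is bijective --- multiplication by $q$ is invertible modulo every power of $p$ --- and this is precisely what collapses the number of $q$-th roots to at most one.
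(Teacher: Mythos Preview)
Your argument is correct: passing to the torsion-free nilpotent quotient $\mathbf G/\mathbf G_{<\infty}$ and invoking uniqueness of roots there forces $c=a^{-1}b\in\mathbf G_{<\infty}$; Lemma~\ref{all finite is central} then makes $c$ central, so $b^q=(ac)^q=a^qc^q$ gives $c^q=e$, and coprimality of $p$ and $q$ finishes it.

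This is a genuinely different route from the paper's. The paper does not pass to the quotient, does not quote the unique-roots theorem, and does not invoke Lemma~\ref{all finite is central}. Instead it works inside $\mathbf L=\langle a,b,G_{<\infty}\rangle$ and argues, by descending the lower central series and using that $\mathbf G$ has no $q$-torsion, that $\mathbf L$ is forced to be abelian; once $\mathbf L$ is abelian, $(ab^{-1})^q=e$ and the absence of $q$-torsion give $a=b$. So the paper's proof is self-contained in the sense that it redoes a special case of the unique-roots argument by hand, whereas you import that fact (together with the centrality lemma) and get a shorter, more conceptual proof. Your closing remark that one could ``replace it by the class-$\leq 2$ reduction already used in the proof of Lemma~\ref{p many p-th roots}'' is in fact closer in spirit to what the paper actually does here.
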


\begin{proof}
In order to prove this lemma, suppose that $a^q=b^q=u$ for some $u\in G_\infty$ and $a,b\in G$. Let $\mathbf L$ denote $\langle a,b,G_{<\infty}\rangle$. Notice that $\mathbf L$ is nilpotent. In order to show that $\mathbf L$ is abelian, let us assume that $\mathbf L$ has nilpotency class at least $2$. Let
\[ \mathbf L=\mathbf L_1> \mathbf L_2>\cdots > \mathbf L_t> \mathbf L_{t+1}=1 \]
be its lower central series. Let $y\in L_{t-1}$. Then $[y,a],[y,b]\in \mathbf L_t\leq\Z(\mathbf L)$, and therefore, $[y,a]^q=[y,a^q]=1$ and $[y,b]^q=[y,b^q]=1$. Since $\mathbf G$ contains no elements of order $q$, it follows that $[y,a]=[y,b]=1$. Therefore, $y\in\Z(\mathbf L)$. Thus, $\mathbf L_{t-1}\leq\Z(\mathbf L)$, which is a contradiction. This proves that $\mathbf L$ is an abelian group. Now, we have
\[ (ab^{-1})^q=a^q(b^{-1})^q=uu^{-1}=e, \]
and since $\mathbf G$ contains no element of order $q$, we conclude that $a=b$. Thus, the lemma has been proven.
\end{proof}

\begin{lemma}\label{numbers of prime roots}
Let $\mathbf G$ be nilpotent group, let $\mathbf G_{<\infty}\cong\mathbf C_{p^\infty}$ for some prime number $p$, and let $u\in G_\infty$. If $u$ has a $p$-th root, then the minimum number of maximal locally cyclic subgroups of $\mathbf G$ whose union contains all prime roots of $u$ is $p$. Otherwise, that number is $1$.
\end{lemma}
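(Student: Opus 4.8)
The plan is to package the prime roots of $u$ into maximal locally cyclic subgroups and count how many are needed, using Lemmas~\ref{p many p-th roots} and~\ref{one q-th root}. I would start from the observation that any locally cyclic subgroup of $\mathbf G$ containing an element of infinite order is torsion-free: a finitely generated locally cyclic group is cyclic, and a cyclic group has no nontrivial element of finite order together with one of infinite order. Hence such a subgroup embeds into the torsion-free group $\mathbf G/\mathbf G_{<\infty}$, so it is isomorphic to a subgroup of $\mathbb Q$. Combining this with Lemmas~\ref{p many p-th roots} and~\ref{all finite is central}: if $u$ has a $p$-th root $a$, then all $p$ of its $p$-th roots lie in $\langle a\rangle\times\mathbf G_{<\infty}$ and have the form $ac$ with $c^p=e$, so any two distinct ones differ by a nontrivial torsion element and therefore never lie in a common locally cyclic subgroup. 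By Zorn's lemma, every locally cyclic subgroup extends to a maximal one.

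These facts already give the lower bounds. If $u$ has a $p$-th root, Lemma~\ref{p many p-th roots} provides exactly $p$ of them, pairwise belonging to no common maximal locally cyclic subgroup, so every family of maximal locally cyclic subgroups covering the prime roots of $u$ has size at least $p$. If $u$ has no $p$-th root, the count is at least $1$ as soon as $u$ has any prime root; and in the extreme case that $u$ has no prime root at all, one checks that $\langle u\rangle$ is already a maximal locally cyclic subgroup (a strictly larger cyclic over-group would exhibit $u$ as a proper power), so the count is exactly $1$.

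For the upper bound I would let $B$ be the set of prime roots of $u$ of exponent $\ne p$ (by Lemma~\ref{one q-th root}, at most one for each prime) and show that $\mathbf D:=\langle B\rangle$ is locally cyclic. It is abelian: for $b,b'\in B$ the subgroup $\langle b,b',\mathbf G_{<\infty}\rangle$ is abelian by the proof of Lemma~\ref{one q-th root} (allowing different prime exponents) — $\mathbf G_{<\infty}$ is central, $u=b^q=b'^{q'}$ is central in this subgroup, and the lower central series collapses because $[z,b]^q=[z,u]=e$ forces the $p$-element $[z,b]$ to vanish. It is torsion-free: writing $d\in\mathbf D\cap\mathbf G_{<\infty}$ as a finite product of prime roots $b_j$ of exponents $q_j$ and raising to $N=\prod q_j$ gives $d^N=u^m$, which must equal $e$ since $d^N$ has finite order while $u$ has infinite order, and then $d=e$ because $d$ is a $p$-element and $\gcd(N,p)=1$. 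An abelian torsion-free group generated by roots of a single nontrivial element has rank $1$, hence embeds in $\mathbb Q$; so $\mathbf D$ is locally cyclic. Since $u=b^q\in\mathbf D$, extending $\mathbf D$ to a maximal locally cyclic subgroup covers all prime roots, which settles the case without a $p$-th root, with count $1$.

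When $u$ has a $p$-th root, with $a_0,\dots,a_{p-1}$ its $p$-th roots, I would set $\mathbf D_i:=\langle a_i,B\rangle$. The crucial — and, I expect, hardest — step is that $a_i$ commutes with every $b\in B$; this would be proved by the technique of Lemma~\ref{p many p-th roots} applied to $\langle a_i,b,\mathbf G_{<\infty}\rangle$: this subgroup has nilpotency class at most $2$ (the commutators with the coprime-exponent generator $b$ vanish as above, so the last term of the lower central series is generated by the $[z,a_i]$, which have $p$-power order, and the standard quotient argument then bounds the class), and then, $[a_i,b]$ being central, $[a_i,b]^p=[a_i^p,b]=[u,b]=e$ and $[a_i,b]^q=[a_i,b^q]=[a_i,u]=e$ force $[a_i,b]=e$. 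Granting this (and $[B,B]=e$ and centrality of $\mathbf G_{<\infty}$), $\mathbf D_i$ is abelian; it is torsion-free because a torsion element of $\mathbf D_i$, passed to $\mathbf G/\mathbf G_{<\infty}$, satisfies a relation in the rank-$1$ torsion-free group $\langle\overline{a_i},\overline B\rangle$ that forces its $a_i$-exponent to be a multiple of $p$, rewriting it as an element of $\mathbf D$, already known torsion-free; so $\mathbf D_i$ embeds in $\mathbb Q$ and is locally cyclic. Extending each $\mathbf D_i$ to a maximal locally cyclic subgroup $M_i$, the $M_i$ are pairwise distinct (if $M_i=M_j$ it would contain the two distinct $p$-th roots $a_i,a_j$) and $\bigcup_i M_i$ contains every $a_i$ (as $a_i\in\mathbf D_i\subseteq M_i$) and every $b\in B$ (as $b\in\mathbf D_0\subseteq M_0$), hence all prime roots of $u$. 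So $p$ maximal locally cyclic subgroups suffice, and with the lower bound the minimum is exactly $p$.
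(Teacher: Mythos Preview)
Your argument is correct, and the lower bound half matches the paper's, but your upper bound takes a genuinely different route from the paper's own proof.

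For the upper bound the paper proves a single clean fact: whenever $x^m=y^n=u$ with $\gcd(m,n)=1$, the subgroup $\langle x,y\rangle$ is actually \emph{cyclic}. The abelianness step is obtained not by rerunning the commutator estimates of Lemmas~\ref{p many p-th roots}--\ref{one q-th root} but by passing to the quotient $\langle x,y\rangle/\langle u\rangle$: this is a nilpotent group generated by two torsion elements of coprime order, hence (via the Sylow decomposition of nilpotent torsion groups) cyclic; so $\langle x,y\rangle=\langle u,z\rangle$ is abelian. Cyclicity then comes from an explicit B\'ezout generator $x^ky^l$ with $kn+lm=1$. Iterating over a transversal $P$ of the prime roots (at most one per prime) gives an ascending chain $\langle P_1\rangle\leq\langle P_2\rangle\leq\cdots$ of cyclic groups, whose union is locally cyclic and extends to a maximal such subgroup; varying the choice of $p$-th root gives the $p$ families.

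Your approach instead pushes the commutator calculations of Lemmas~\ref{p many p-th roots} and~\ref{one q-th root} further (now allowing different prime exponents, and mixing a $p$-th root with a $q$-th root) to obtain commutativity directly, and then invokes the structure theory of rank-$1$ torsion-free abelian groups to conclude local cyclicity. This works, but it is heavier: you must redo the ``class $\leq 2$'' descent in each configuration and supply a separate torsion-freeness argument for the $\mathbf D_i$. The paper's quotient-by-$\langle u\rangle$ trick sidesteps all of that in one line and yields the stronger conclusion that each finite set of coprime-exponent roots already generates a cyclic group. Either argument proves the lemma; the paper's is shorter and more self-contained, while yours shows that the earlier commutator techniques alone suffice.
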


\begin{proof}
Note that, for two different $p$-th roots of $u\in G$ of infinite order, there is no cyclic nor locally cyclic subgroup that contains both of them. Therefore, the minimum number of maximal locally cyclic subgroups of $\mathbf G$ whose union contains all $p$-th roots of $u$ is at most $p$. 

We shall prove that, if $x^m=y^n=u$ for some $x,y,u\in G_\infty$, where $m$ and $n$ are relatively prime, then $\langle x,y\rangle$ is cyclic. Let us show first that $\langle x,y\rangle$ is abelian. Since $\langle x,y\rangle$ is a nilpotent group, $\langle x,y\rangle/\langle u\rangle$ is nilpotent too. Moreover, $\langle x,y\rangle/\langle u\rangle=\langle \overline x,\overline y\rangle$, where $\overline x=x\langle u\rangle$ and $\overline y=y\langle u\rangle$. Since $o(\overline x)$ and $o(\overline y)$ are relatively prime, $\langle x,y\rangle/\langle u\rangle$ is cyclic. Let $\langle x,y\rangle/\langle u\rangle=\langle \overline z\rangle$, where $\overline z=z\langle u\rangle$ for some $z\in G$. Further, since $u\in\Z(\langle x,y\rangle)$, then $u$ and $z$ commute. Therefore, since $\langle x,y\rangle=\langle u,z\rangle$, we conclude that $\langle x,y\rangle$ is abelian.

 Let us show now that $\langle x,y\rangle$ is cyclic. By the Chinese remainder theorem, there are $k,l\in\mathbb Z$ such that $lm+kn=1$. Then $x,y\in\langle x^k y^l\rangle$ because
\begin{align*}
&(x^ky^l)^n=x^{kn}(y^n)^l=x^{kn}(x^m)^l=x^{kn+ml}=x\text{ and similarly}\\
&(x^ky^l)^m=y,
\end{align*}
and $(x^ky^l)^{mn}=u$.

Finally, let $P$ be a set of prime roots of $u$ such that, for every prime number $r$, $P$ contains at most one $r$-th root of $u$. Now, in order to prove the lemma, it is sufficient to show that $\langle P\rangle$ is a locally cyclic subgroup. Notice that, if $P_i$ denotes $\{x_1,x_2,\dots,x_i\}$, then by the above discussion, $\langle P_i\rangle$ is a cyclic subgroup of $\mathbf G$ for all $i$. Then
\[\langle P\rangle=\bigcup_{i\in\mathbb N}\langle P_i\rangle,\]
and $\langle P\rangle$ is a locally cyclic subgroup since $P_i\leq P_{i+1}$ for all $i$. Moreover, if $P$ was a maximal set of prime roots with the above-mentioned properties, then $\langle P\rangle$ is a maximal locally cyclic subgroup of $\mathbf G$. Thus, the lemma has been proven.
\end{proof}

Let us show now the main result of this section.

\begin{theorem}\label{Prufer groups are the only abelian groups with the property}
Let $\mathbf G$ and $\mathbf H$ be nilpotent groups none of which is a Pr\" ufer group. Then $\mathcal G(\mathbf G)\cong\mathcal G(\mathbf H)$ implies $\vec{\mathcal G}(\mathbf G)\cong\vec{\mathcal G}(\mathbf H)$.
\end{theorem}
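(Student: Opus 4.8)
The plan is to reduce the reconstruction of the directed power graph from the power graph to the already-known cases, and then handle the one genuinely new situation — when the torsion subgroup is quasicyclic — using the three preparatory lemmas. So first I would recall that, by \cite{zahirovic-1,zahirovic-2}, if $\mathbf G$ has no intersection-free quasicyclic subgroup, then $\mathcal G(\mathbf G)\cong\mathcal G(\mathbf H)$ already forces $\vec{\mathcal G}(\mathbf G)\cong\vec{\mathcal G}(\mathbf H)$. Hence the only nilpotent groups not yet covered are those possessing an intersection-free quasicyclic subgroup. I would argue that, for a nilpotent group $\mathbf G$, an intersection-free quasicyclic subgroup $\mathbf C_{p^\infty}$ must in fact be the whole torsion subgroup $\mathbf G_{<\infty}$: using Theorem \ref{about all nilpotent groups}, $\mathbf G_{<\infty}$ is the direct sum of the maximal $q$-subgroups, and any cyclic subgroup of another $q$-subgroup, or any larger cyclic $p$-subgroup, would meet $\mathbf C_{p^\infty}$ nontrivially while not being contained in it. Thus the outstanding case is exactly: $\mathbf G_{<\infty}\cong\mathbf C_{p^\infty}$, $\mathbf G$ is not itself the Pr\"ufer group, and likewise for $\mathbf H$.

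In this remaining case the strategy is to show that the power graph already determines, for each vertex, whether it has finite or infinite order, and — for infinite-order vertices — the full arc structure. The finite-order segment is a connected component of $\mathcal G(\mathbf G)$ (noted in Section 2) and is a complete graph, since $\mathbf G_{<\infty}\cong\mathbf C_{p^\infty}$; so one identifies $G_{<\infty}$ graph-theoretically as the unique connected component that is a complete graph (here one must rule out, using that $\mathbf G$ is not quasicyclic, that the whole graph is complete or that some infinite-order component is complete — infinite-order elements generate $\mathbb Z$-like structures whose components are not complete). Within the infinite-order segment, the equivalence $\equiv$ (equal closed neighbourhoods) collapses each locally cyclic piece appropriately, and the known machinery from \cite{zahirovic-2} reconstructs arcs between infinite-order vertices and the "downward" arcs from infinite-order to finite-order vertices from the power graph. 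The one thing that machinery cannot see, because the finite-order segment is complete, is the direction of arcs \emph{inside} $\mathbf G_{<\infty}$ and the number of prime roots an infinite-order element has inside the torsion-extended subgroups — and this is precisely what Lemmas \ref{p many p-th roots}, \ref{one q-th root} and \ref{numbers of prime roots} control.

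The heart of the argument is therefore the following numerical invariant, read off purely from $\mathcal G(\mathbf G)$ via Lemma \ref{numbers of prime roots}: for each infinite-order $u$, the minimum number of maximal locally cyclic subgroups whose union contains all prime roots of $u$ is $p$ if $u$ has a $p$-th root and $1$ otherwise. This lets us recover $p$ itself (it is the unique prime that ever occurs as this invariant, granting that infinitely many $u$ do have $p$-th roots — true unless $\mathbf G=\mathbf C_{p^\infty}$, which we have excluded, so some infinite-order element is a $p$-th power infinitely often up the chain), and hence recover the isomorphism type of $\mathbf G_{<\infty}\cong\mathbf C_{p^\infty}$ and its internal directed power graph, which is rigid. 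Combining: from $\mathcal G(\mathbf H)\cong\mathcal G(\mathbf G)$ we get $\mathbf H_{<\infty}\cong\mathbf C_{p^\infty}$ with the same $p$, an isomorphism of the infinite-order directed segments by the cited results, matching downward-arc patterns, and a canonical identification of the finite-order segments; assembling these yields $\vec{\mathcal G}(\mathbf G)\cong\vec{\mathcal G}(\mathbf H)$.

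The main obstacle I anticipate is the bookkeeping needed to splice the four pieces — the complete finite-order component, the infinite-order segment, the downward arcs, and the internal torsion arcs — into a single graph isomorphism, and in particular verifying that an infinite-order vertex $v$ of $\mathbf H$ is matched to one with the \emph{same} prime-root profile as its counterpart in $\mathbf G$, so that the directed edges from $v$ into $H_{<\infty}$ land on the right elements. Lemmas \ref{p many p-th roots} and \ref{one q-th root} are exactly what guarantees the profiles are forced (``$p$ or $0$'' $p$-th roots, ``at most $1$'' $q$-th root), so the obstacle is bridged once one checks that the $\equiv$-classes and the Lemma \ref{numbers of prime roots} invariant together pin down, for each $u\in G_\infty$, both whether $u$ is a $p$-th power and whether it is an $r$-th power for $r\neq p$ — after which the arc from each prime root into $\mathbf G_{<\infty}$ is determined up to the rigid automorphism structure of $\mathbf C_{p^\infty}$.
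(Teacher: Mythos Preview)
Your treatment of the case $\mathbf G_{<\infty}\cong\mathbf C_{p^\infty}$ matches the paper's Part~2 closely: identify the finite-order segment as the complete component, use \cite[Proposition~7]{zahirovic-2} to see that $\varphi$ already respects arcs on the infinite-order segment, and then read off the prime $p$ from the invariant of Lemma~\ref{numbers of prime roots}. (One small correction: there are \emph{no} arcs between infinite-order and finite-order vertices in either the power graph or the directed power graph, so the ``downward arcs'' you worry about do not exist; the splicing step is trivial once the two segments are handled.)

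The genuine gap is your reduction step. You assert that in a nilpotent group an intersection-free quasicyclic subgroup $\mathbf C_{p^\infty}$ must coincide with $\mathbf G_{<\infty}$, but your justification does not hold up. For $q\neq p$, a cyclic $q$-subgroup meets $\mathbf C_{p^\infty}$ \emph{trivially}, not nontrivially; the correct argument multiplies a $q$-element by an order-$p$ element of $\mathbf C_{p^\infty}$ (they commute by Theorem~\ref{about all nilpotent groups}) to produce a cyclic subgroup of order $pq$ that does meet $\mathbf C_{p^\infty}$. For the $p$-part your sentence ``any larger cyclic $p$-subgroup would meet $\mathbf C_{p^\infty}$ nontrivially while not being contained in it'' is precisely what intersection-freeness \emph{denies}, so you are asserting the conclusion you need, not proving it. The paper does \emph{not} prove your reduction; instead its Part~1 keeps open the possibility that $\mathbf G_{<\infty}$ is a non-quasicyclic nilpotent $p$-group possessing an intersection-free quasicyclic subgroup, and handles it by a further case split: if $\mathbf G_{<\infty}$ has a finite maximal cyclic subgroup of order $r^k$, the corresponding maximal clique pins down $r$ for both $\mathbf G$ and $\mathbf H$, and the $p$-group structure of closed neighbourhoods (via \cite[Lemma~15, Theorem~10]{zahirovic-2}) lets one rebuild the directed graph; if $\mathbf G_{<\infty}$ has no finite maximal cyclic subgroup, the paper produces two intersection-free quasicyclic subgroups $\mathbf C_1,\mathbf C_2$, uses a central element of order $p$ in one of them together with an order-$p^2$ element of the other to violate intersection-freeness, and concludes this sub-case is vacuous. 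You need either to carry out this case analysis, or to supply a correct proof that an intersection-free $\mathbf C_{p^\infty}$ in a nilpotent group exhausts the torsion --- and the latter, if true, still requires a genuine argument using nilpotency and the centre, not the one-liner you gave.
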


\begin{proof}
Let $\mathbf G$ and $\mathbf H$ be nilpotent groups whose power graphs are isomorphic. By Theorem \ref{about all nilpotent groups}, since the group $\mathbf G$ is nilpotent, $\mathbf G_{<\infty}\leq\mathbf G$, and $\mathbf G_{<\infty}$ is the direct sum of its unique maximal $p$-subgroups. We separate this proof into two parts; in the first part, we assume that $\mathbf G_{<\infty}$ is non-quasicyclic, and, in the second part, we assume that  $\mathbf G_{<\infty}$ is quasicyclic. \vspace{2mm}

\noindent {\it Part 1.} Suppose now that $\mathbf G_{<\infty}$ is not a Pr\" ufer group. We remind the reader that a quasicyclic subgroup $\mathbf C_{p^\infty}$ is said to be intersection-free if each of its nonidentity elements is contained only in cyclic subgroups of $\mathbf G$ that are subgroups of $\mathbf C_{p^\infty}$. By \cite[Theorem 21]{zahirovic-2}, if $\mathbf G_{<\infty}$ contains no intersection-free quasicyclic subgroup, then $\vec{\mathcal G}(\mathbf G)\cong\vec{\mathcal G}(\mathbf H)$. Note that, if the nilpotent torsion group $\mathbf G_{<\infty}$ is not a $p$-group, then it does not have an intersection-free quasicyclic subgroup.

Therefore, the remaining case is when both $\mathbf G_{<\infty}$ and $\mathbf H_{<\infty}$ are non-quasi\-cyclic nilpotent $p$-groups whose some quasicyclic subgroups are intersection-free. If $\mathbf G_{<\infty}$ has a finite maximal cyclic subgroup of order $r^k$ for a prime number $r$, then both $\mathcal G(\mathbf G_{<\infty})$ and $\mathcal G(\mathbf H_{<\infty})$ have maximal cliques of order $r^k$, and thus, both $\mathbf G_{<\infty}$ and $\mathbf H_{<\infty}$ are $r $-groups. Therefore, $\big(\vec{\mathcal G}(\mathbf G)\big)[C_G]\cong\big(\vec{\mathcal G}(\mathbf H)\big)[C_H]$, where $\mathbf C_G$ and $\mathbf C_H$ are any pair of respective intersection-free quasicyclic subgroups of $\mathbf G$ and $\mathbf H$. Furthermore, maximal cliques of $\vec{\mathcal G}(\mathbf G_{<\infty})$ and $\vec{\mathcal G}(\mathbf H_{<\infty})$ and their intersections are quasicyclic or cyclic subgroups of $\mathbf G$ and $\mathbf H$, respectively. Also, since $\mathbf G_{<\infty}$ is a $p$-group, if $x,y\in G_{<\infty}$ have different closed neighborhoods in $\mathcal G(\mathbf G)$, then $x\rightarrow y$ if and only if $\overline N(x)\subset\overline N(y)$. Additionally, by \cite[Lemma 15]{zahirovic-2}, the closed neighborhood of an element $x$ in $\big(\mathcal G(\mathbf G)\big)[G_{<\infty}]$ is equal to
\[\bigcup_{i\in I} \mathrm{gen}(\mathbf C_{p^i}),\]
where $\mathrm{gen}(\mathbf C)$ denotes the set of all generators of a cyclic group $\mathbf C$, $I$ is some interval of natural numbers, and each $\mathbf C_{p^i}$ is some cyclic group of order $p^i$ so that $\mathbf C_{p^i}<\mathbf C_{p^{i+1}}$ for all $i$. Since the same holds in finite-order segment of $\mathcal G(\mathbf H)$, one can construct an isomorphism from $\vec{\mathcal G}(\mathbf G_{<\infty})$ to $\vec{\mathcal G}(\mathbf H_{<\infty})$ so that it maps elements each closed neighborhood of $\big(\mathcal G(\mathbf G)\big)[G_{<\infty}]$ onto elements of its respective closed neighborhood of $\big(\mathcal G(\mathbf H)\big)[H_{<\infty}]$. % (For more detailed proof, see \cite[Proposition 19]{zahirovic-2}). 
Now, by \cite[Theorem 10]{zahirovic-2}, it follows that $\vec{\mathcal G}(\mathbf G)\cong\vec{\mathcal G}(\mathbf H)$.

Finally, suppose that the nilpotent non-quasicyclic group $\mathbf G_{<\infty}$ has no finite maximal cyclic subgroup, and let $\mathbf G_{<\infty}$ be an $r$-group for a prime number $r$. Since $\mathbf G_{<\infty}$ is nilpotent, it has nontrivial center, and since it is not a Pr\" ufer group, it contains at least two quasicyclic subgroups $\mathbf C_1$ and $\mathbf C_2$. Since all quasicyclic subgroups of $\mathbf G_{<\infty}$ are intersection-free, $\mathbf C_1\cap\mathbf C_2$ is trivial, and, without loss of generality, we may suppose that $\Z(\mathbf G_{<\infty})\cap\mathbf C_1$ is nontrivial. Now, for $x\in C_1$ of order $r$ and $y\in C_2$ of order $r^2$, $\langle x,y\rangle$ is an abelian group, and $\langle y^2\rangle\leq \langle y\rangle,\langle xy\rangle$, which is a contradiction since $\mathbf C_2$ is intersection-free. Therefore, there is no such nilpotent group $\mathbf G_{<\infty}$.\vspace{2mm}

\noindent {\it Part 2.} Suppose now that $\mathbf G_{<\infty}$ is a Pr\" ufer group. Since $\mathbf G$ is not a Pr\" ufer group, $\mathbf G$ contains elements of infinite order, too. By \cite[Lemma 4]{zahirovic-2}, the finite-order segment of $\mathcal G(\mathbf H)$ is the infinite complete graph. Therefore, $\mathbf H_{<\infty}$ is also a Pr\" ufer group.

Let $p$ and $q$ be prime numbers so that $\mathbf G_{<\infty}$ and $\mathbf H_{<\infty}$ are the Pr\" ufer $p$-group and the Pr\" ufer $q$-group, respectively. Let $\varphi: G\rightarrow H$ be an isomorphism from $\mathcal G(\mathbf G)$ to $\mathcal G(\mathbf H)$. Let us prove that $p=q$. First we need to prove that $\varphi$ is an isomorphism between infinite-order segments of $\vec{\mathcal G}(\mathbf G)$ and $\vec{\mathcal G}(\mathbf H)$. Let $C\subseteq G$ induce a connected component of the infinite-order segment of $\mathcal G(\mathbf G)$, and let $x\in C$ be an element of $\mathbf G$ which has a $p$-th root. By Lemma \ref{numbers of prime roots}, $x$ has $p$ many $p$-th roots. Let $x_1$ and $x_2$ be two different $p$-th roots of $x$, i.e. $x=(x_1)^p=(x_2)^p$. Notice that, since $\overline N(x_1)\cap\overline N(x_2)\subseteq  O(x_1)\cap O(x_2)$, $\overline N(x_1)\cap\overline N(x_2)$ has only one nontrivial connected component and at most two isolated vertices. Then, by \cite[Proposition 7]{zahirovic-2}, $\varphi\rvert_{G_\infty}$ is an isomorphism from $\big(\vec{\mathcal G}(\mathbf G)\big)[C]$ to $\big(\vec{\mathcal G}(\mathbf H)\big)[\varphi(C)]$. Thus, $\varphi$ is an isomorphism between the infinite-order segments of $\vec{\mathcal G}(\mathbf G)$ and $\vec{\mathcal G}(\mathbf H)$.

It follows that $\varphi$ maps all maximal locally cyclic subgroups of $\mathbf G$ containing elements of infinite order onto all maximal locally cyclic subgroups of $\mathbf H$ containing elements of infinite order. Furthermore, notice that, for elements $x, y\in G$ of infinite order, $[y]_\equiv$ contains at most one prime root of $x$, and $[y]_\equiv$ contains a prime root of $x$ if and only if $y\rightarrow x$ and $x\rightarrow z\rightarrow y$ for no element $ z\not\in [x]_\equiv\cup[y]_\equiv$. Therefore, for any element $x\in G$ of infinite order, $\varphi$ maps the set of all $\equiv$-classes containing prime roots of $x$ onto the set of all $\equiv$-classes containing prime roots of $\varphi(x)$. Also, note that, if an element $y\in\mathbf G$ is contained in a subgroup of $\mathbf G$, then $[y]_\equiv$ is contained in that subgroup as well. Thus, $\varphi$ maps any minimal family of maximal locally cyclic subgroups of $\mathbf G$ whose union contains all prime roots of $x$ onto a minimal family of maximal locally cyclic subgroups of $\mathbf H$ whose union contains all prime roots of $\varphi(x)$. Therefore, by Lemmas \ref{p many p-th roots}, \ref{one q-th root} and \ref{numbers of prime roots}, it follows that $p=q$. Therefore, $\mathbf G_{<\infty}\cong\mathbf H_{<\infty}$. Hence, $\vec{\mathcal G}(\mathbf G)$ and $\vec{\mathcal G}(\mathbf H)$ have isomorphic finite-order segments. Also, by \cite[Theorem 10]{zahirovic-2}, $\vec{\mathcal G}(\mathbf G)$ and $\vec{\mathcal G}(\mathbf H)$ have isomorphic infinite-order segments, too. Therefore, $\vec{\mathcal G}(\mathbf G)\cong\vec{\mathcal G}(\mathbf H)$.
\end{proof}

\section{The sufficient condition is not a necessary one}

In the previous section, we showed that any two non-quasicyclic nilpotent groups with isomorphic power graphs also have isomorphic directed power graphs. It turns out that quasicyclic subgroups play an important role in describing what groups have the property that their power graphs determine their directed power graphs up to isomorphism. As it was proven in \cite{zahirovic-2}, if a group $\mathbf G$ does not contain a quasicyclic intersection-free subgroup, then for any group $\mathbf H$, if $\mathbf G$ and $\mathbf H$ have isomorphic power graphs, they also have isomorphic directed power graphs.

In this section, we find some groups $\mathbf G$ which contain a quasicyclic in\-ter\-se\-ction-free subgroup, but whose directed power graph does determine $\vec{\mathcal G}(\mathbf G)$ up to isomorphism. Moreover, the power graph of any of these groups not only determines the directed power graph of the group, but it determines the group itself up to isomorphism. Thus, the sufficient condition from \cite{zahirovic-2} is not a necessary one as well. Before introducing the main result of this section, we are going to prove several Lemmas first. We remind the reader that $\pi_i$ denotes the projection map. 

\begin{lemma}\label{all maximal locally cyclic are like Q}
Let $\mathbf A$ be a maximal locally cyclic torsion-free subgroup of $\mathbf H=\mathbf H_1\times\mathbf C_{p^\infty}$, where $\mathbf H_1\cong\mathbb Q$ and $p$ is a prime number. Then $\pi_1(A)=H_1$. Moreover, $\mathbf A\cong\mathbb Q$ and $\mathbf H=\mathbf A\times\mathbf C_{p^\infty}$.
\end{lemma}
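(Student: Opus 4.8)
The plan is to exploit the fact that $\mathbf A$, being torsion-free, must intersect the torsion subgroup $\{0\}\times\mathbf C_{p^\infty}$ of $\mathbf H$ trivially; this will force $\mathbf A$ to be the graph of a homomorphism defined on $\pi_1(A)\le\mathbb Q$, and then the divisibility of $\mathbf C_{p^\infty}$ lets me enlarge that graph to one sitting over all of $\mathbb Q$, so maximality will do the rest.

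\emph{Step 1: the projection $\pi_1$ is injective on $A$.} The kernel of $\pi_1\!\restriction_A$ is $A\cap(\{0\}\times\mathbf C_{p^\infty})$, and every element of $\{0\}\times\mathbf C_{p^\infty}$ has finite order; since $\mathbf A$ is torsion-free this kernel is trivial. Hence $\mathbf A\cong\pi_1(\mathbf A)$ is (isomorphic to) a subgroup of $\mathbf H_1\cong\mathbb Q$, and moreover $A=\{(x,f(x))\mid x\in\pi_1(A)\}$ is the graph of a group homomorphism $f\colon\pi_1(A)\to\mathbf C_{p^\infty}$.

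\emph{Step 2: $\pi_1(A)=H_1$, hence $\mathbf A\cong\mathbb Q$.} Identify $\mathbf H_1$ with $\mathbb Q$. Since $\mathbf C_{p^\infty}$ is divisible, it is an injective $\mathbb Z$-module, so $f$ extends to a homomorphism $\bar f\colon\mathbb Q\to\mathbf C_{p^\infty}$. Set $\bar A=\{(x,\bar f(x))\mid x\in\mathbb Q\}\le\mathbf H$. Then $A\subseteq\bar A$, the map $x\mapsto(x,\bar f(x))$ is an isomorphism $\mathbb Q\to\bar A$, so $\bar A$ is torsion-free and locally cyclic (every finitely generated subgroup of $\mathbb Q$ lies in some $\tfrac1n\mathbb Z$ and is therefore cyclic, so every subgroup of $\mathbb Q$ is locally cyclic). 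By maximality of $\mathbf A$ we get $\mathbf A=\bar A$; in particular $\pi_1(A)=\mathbb Q=H_1$ and $\mathbf A\cong\mathbb Q$.

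\emph{Step 3: the direct decomposition.} Write $\mathbf C_{p^\infty}$ for the canonical copy $\{0\}\times\mathbf C_{p^\infty}\le\mathbf H$. By Step 1, $\mathbf A\cap\mathbf C_{p^\infty}=1$. For the sum, take any $(x,c)\in H$; by Step 2 the element $(x,f(x))$ lies in $A$, and then
\[(x,c)=(x,f(x))+(0,c-f(x))\in \mathbf A+\mathbf C_{p^\infty}.\]
Since $\mathbf H$ is abelian, this gives $\mathbf H=\mathbf A\times\mathbf C_{p^\infty}$, as claimed.

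There is no deep obstacle here; the one point that needs care is checking that the enlargement $\bar A$ produced in Step 2 is again locally cyclic and torsion-free, so that maximality of $\mathbf A$ can legitimately be invoked — this is exactly where the explicit structure of subgroups of $\mathbb Q$ and the divisibility (injectivity) of $\mathbf C_{p^\infty}$ enter.
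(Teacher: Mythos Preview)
Your proof is correct, but it takes a genuinely different route from the paper's. The paper first shows directly that $\mathbf A$ is divisible: assuming some $u\in A$ has no $q$-th root in $\mathbf A$, it builds an ascending chain $\langle u_i\rangle$ exhausting $\mathbf A$, then hand-picks compatible $q$-th roots $v_i$ of the $u_i$ inside $\mathbf H$ (treating the cases $q\neq p$ and $q=p$ separately) to produce a strictly larger locally cyclic group, contradicting maximality. Once $\mathbf A$ is divisible, so is its image $\pi_1(A)$, and since $\mathbb Q$ has no proper nontrivial divisible subgroup, $\pi_1(A)=H_1$; injectivity of $\pi_1\!\restriction_A$ then gives $\mathbf A\cong\mathbb Q$. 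By contrast, you never argue that $\mathbf A$ is divisible: you immediately identify $\mathbf A$ with the graph of a homomorphism $f\colon\pi_1(A)\to\mathbf C_{p^\infty}$, invoke injectivity of the divisible group $\mathbf C_{p^\infty}$ to extend $f$ to all of $\mathbb Q$ in one stroke, and let maximality force $\mathbf A$ to equal this graph. Your argument is shorter and more conceptual, trading the paper's explicit root-chasing for the standard fact that divisible abelian groups are injective $\mathbb Z$-modules; the paper's argument is more self-contained and elementary, needing nothing beyond the definition of divisibility.
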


\begin{proof}
Let us prove first that $\mathbf A$ is a divisible group. Suppose that some element $u\in A$ does not have a $q$-th root in $\mathbf A$ for some prime number $q$. Since $\mathbf A$ is locally cyclic, then $\mathbf A=\bigcup_{i\in\mathbb N} \langle u_i\rangle$ for some elements $u_i\in A$ where $u_1=u$ and $\langle u_i\rangle<\langle u_{i+1}\rangle$ for every $i\in\mathbb N$. Since $\mathbf H$ is a divisible group, every $u_i$ has a $q$-th root $v_i$ such that $\langle v_i\rangle<\langle v_{i+1}\rangle$. Namely, if $q\neq p$, then every $u_i$ has the unique $q$-th root $v_i$. On the other hand, if $q=p$, then every $u_i$ has $p$-many $q$-th roots; in this case we choose $v_1$ to be any $p$-th root of $u_1$, and for each $i\in\mathbb N$, we choose the one $p$-th root $v_{i+1}$ such that $v_i\in\langle v_{i+1}\rangle$. This way, we obtain a locally cyclic subgroup $\mathbf B=\bigcup_{i\in \mathbb N}\langle v_i\rangle$ such that $\mathbf A\lneq\mathbf B$, which is a contradiction. Therefore, group $\mathbf A$ is divisible.

Let $K=\pi_1(A)$. Notice that $\mathbf K$ is a nontrivial subgroup of $\mathbf H_1$ because, otherwise, $\mathbf A$ would be a torsion group. Notice that the group $\mathbf K$ divisible since it is a homomorphic image of the divisible group $\mathbf A$. Therefore, since $\mathbf H_1\cong\mathbb Q$ and the group of rational numbers has no proper divisible subgroup, it follows that $K=H_1$. Thus, $\mathbf K\cong\mathbb Q$ because $\mathbb Q$ is the only divisible locally cyclic torsion-free group up to isomorphism.

Let us show now that $\mathbf A\cong\mathbb Q$. Since $\pi_1(A)=H_1$, $\pi_1$ is a surjection from $A$ to $H_1$. Furthermore, $\pi_1$ is also an injection because $\mathbf A$ is a locally cyclic subgroup of $\mathbf H$. Therefore, $\pi_1$ is an isomorphism from $\mathbf A$ to $\mathbf H_1$, and thus, $\mathbf A\cong\mathbb Q$. Moreover, $\mathbf A\cap\mathbf C_{p^\infty}$ is the trivial group, $\mathbf A\mathbf C_{p^\infty}=\mathbf H$, and every element of $\mathbf A$ commutes with each element of $\mathbf C_{p^\infty}$. Thus, $\mathbf H=\mathbf A\times\mathbf C_{p^\infty}$.
\end{proof}

\begin{lemma}\label{maximal locally cyclic onto maximal locally cyclic}
Let $\mathbf H=\mathbf H_1\times\mathbf C_{p^\infty}$, where $\mathbf H_1\cong \mathbb Q$ and $p$ is a prime number, and let $\mathbf G$ be a group. Let $\varphi$ be an isomorphism from $\mathcal G(\mathbf H)$ to $\mathcal G(\mathbf G)$. If $\mathbf A$ is a maximal locally cyclic torsion-free subgroup of $\mathbf H$, then $\mathbf B=\varphi(\mathbf A)$ is a maximal locally cyclic torsion-free subgroup of $\mathbf G$ isomorphic to $\mathbb Q$.
\end{lemma}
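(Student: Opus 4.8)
The plan is to reduce the assertion to the infinite-order segments of the two power graphs, there promote $\varphi$ to an isomorphism of \emph{directed} power graphs, and then exploit that maximal locally cyclic subgroups, and the isomorphism type of $\mathbb Q$, are visible in the directed power graph. First I would show that $\varphi$ maps the infinite-order segment of $\mathcal G(\mathbf H)$ onto that of $\mathcal G(\mathbf G)$ and (after a harmless normalisation of $\varphi$ on the torsion component, which is a totally symmetric clique) $\varphi(e_{\mathbf H})=e_{\mathbf G}$. Indeed $\mathbf H_{<\infty}=\{e_{\mathbf H}\}\times\mathbf C_{p^\infty}$ has complete power graph, and since no finite-order element of a group is adjacent to an infinite-order one, $H_{<\infty}$ is a connected component of $\mathcal G(\mathbf H)$ that is an infinite complete graph, while $H_\infty$ is a single connected component (any infinite-order $(a,c)$ is joined to $(p^ka,e_{\mathbf H})$, where $p^k$ is the order of $c$, and the elements $(r,e_{\mathbf H})$, $r\neq0$, already lie in one component). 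No connected component of a power graph consisting only of infinite-order elements can be complete: if $x$ lies in such a component $X$ then $\langle x\rangle\setminus\{e\}\subseteq X$ yet $x^2\not\sim x^3$. Hence $\varphi(H_{<\infty})$ is an infinite complete connected component of $\mathcal G(\mathbf G)$, so it contains a finite-order element, hence $e_{\mathbf G}$, hence it equals $G_{<\infty}$, and $\varphi(H_\infty)=G_\infty$; the same argument shows $G_{<\infty}$ is a subgroup isomorphic to $\mathbf C_{q^\infty}$ for a prime $q$.

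Next I would mimic Part~2 of the proof of Theorem~\ref{Prufer groups are the only abelian groups with the property} to upgrade $\varphi$ on the infinite-order segments to an isomorphism of directed power graphs. The group $\mathbf H$ is abelian with $\mathbf H_{<\infty}\cong\mathbf C_{p^\infty}$ and it is divisible, so every infinite-order element of $\mathbf H$ has a $p$-th root; by Lemma~\ref{p many p-th roots} it has exactly $p$ of them, and by Lemma~\ref{one q-th root} at most one $r$-th root for each prime $r\neq p$. For two distinct $p$-th roots $x_1,x_2$ of an infinite-order $x$ one has $\overline N(x_1)\cap\overline N(x_2)\subseteq O(x_1)\cap O(x_2)$, so this set has a single nontrivial component and at most two isolated vertices, and \cite[Proposition~7]{zahirovic-2} then turns $\varphi$ into an isomorphism between the directed infinite-order segments. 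Together with $\varphi(e_{\mathbf H})=e_{\mathbf G}$ this gives an isomorphism from $\vec{\mathcal G}(\mathbf H)$ on $H_\infty\cup\{e_{\mathbf H}\}$ onto $\vec{\mathcal G}(\mathbf G)$ on $G_\infty\cup\{e_{\mathbf G}\}$.

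By Lemma~\ref{all maximal locally cyclic are like Q}, $\mathbf A\cong\mathbb Q$ and $\mathbf H=\mathbf A\times\mathbf C_{p^\infty}$, so $A\subseteq H_\infty\cup\{e_{\mathbf H}\}$ and $\varphi$ provides a digraph isomorphism $\vec{\mathcal G}(\mathbf A)\cong\vec{\mathcal G}(\mathbf G)[B]$. In the directed power graph restricted to infinite-order elements the infinite cyclic subgroups are precisely the in-neighbourhoods of vertices and the inclusion order among them is encoded by the arcs, so maximal locally cyclic torsion-free subgroups are reconstructible there — this is exactly the transfer of maximal locally cyclic subgroups across a directed isomorphism already used in the proof of Theorem~\ref{Prufer groups are the only abelian groups with the property}. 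Hence $\mathbf B=\varphi(A\setminus\{e_{\mathbf H}\})\cup\{e_{\mathbf G}\}$ is a maximal locally cyclic torsion-free subgroup of $\mathbf G$; being locally cyclic and torsion-free it embeds in $\mathbb Q$, and its isomorphism type is read off from $\vec{\mathcal G}(\mathbf B)\cong\vec{\mathcal G}(\mathbf A)\cong\vec{\mathcal G}(\mathbb Q)$ (the poset of its cyclic subgroups, hence its divisibility type, is determined), so $\mathbf B\cong\mathbb Q$.

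The step I expect to be the real obstacle is the passage from the undirected to the directed isomorphism on the infinite-order segment: one must verify that the common closed neighbourhood of two distinct $p$-th roots of an infinite-order element of $\mathbf H$ has exactly the ``smallness'' demanded by \cite[Proposition~7]{zahirovic-2}, uniformly over the single infinite-order component; afterwards the remaining steps are bookkeeping in the directed power graph. An alternative, purely undirected route is to characterise $\mathbf A$ as a maximal vertex set inducing a graph isomorphic to $\mathcal G(\mathbb Q)$ — enlargement being impossible because any added vertex would acquire, inside $\mathbf A$, a neighbourhood containing a vertex adjacent to all the others, whereas no neighbourhood in $\mathcal G(\mathbb Q)$ enjoys this property — but then recovering the group structure of $B$ becomes the delicate point.
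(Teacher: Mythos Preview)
Your proposal is correct and follows essentially the same route as the paper: upgrade $\varphi$ on the infinite-order segment to a directed-power-graph isomorphism via \cite[Proposition~7]{zahirovic-2} using the $p$-th roots trick, then transfer maximal locally cyclic torsion-free subgroups across and recover the isomorphism type $\mathbb Q$. The paper's write-up is terser: it does not bother normalising $\varphi$ on the torsion part or including $e$ (it works entirely on $H_\infty$ and $G_\infty$), and for the final step it simply invokes \cite[Corollary~2]{zahirovic-1} rather than arguing that the directed power graph determines the divisibility type of a torsion-free locally cyclic group; your alternative undirected route is not pursued there.
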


\begin{proof}
Let $x\in H$. Every element of infinite order of $\mathbf H$ has $p$ many $p$-th roots. Let $x_1$ and $x_2$ be two different $p$-th roots of $x$. Then the subgraph of $\overline{\mathcal G(\mathbf H)}$ induced by $\overline N(x_1)\cap\overline N(x_2)$ has only one nontrivial connected component and only two isolated vertices since 
\[\overline N(x_1)\cap\overline N(x_2)= O(x_1)\cap O(x_2)=\langle x\rangle\setminus\{e\}.\]
 Thus, by \cite[Proposition 7]{zahirovic-2}, it follows that $\varphi|_{G_\infty}$ is an isomorphism between the infinite-order segments of $\vec{\mathcal G}(\mathbf H)$ and $\vec{\mathcal G}(\mathbf G)$. Therefore, $B=\varphi(A)$ is the universe of a locally cyclic subgroup of $\mathbf G$. Furthermore, if $\mathbf B$ was not a maximal locally cyclic subgroup, then there would be some locally cyclic subgroup $\mathbf D$ of $\mathbf G$ such that $\mathbf B<\mathbf D$. Then $\varphi^{-1}(D)$ would be the universe a locally cyclic subgroup $\mathbf C$ of $\mathbf H$ such that $\mathbf A<\mathbf C$, which would be a contradiction. Therefore, $\varphi$ maps the maximal locally cyclic torsion-free subgroup $\mathbf A$ of $\mathbf H$ onto the maximal locally cyclic torsion-free subgroup $\mathbf B$ of $\mathbf G$. Furthermore, by \cite[Corollary 2]{zahirovic-1}, $\mathbf B$ is isomorphic to $\mathbb Q$.
\end{proof}

\begin{lemma}\label{torsion part is a central subgroup}
Let $\mathbf H=\mathbf H_1\times\mathbf C_{p^\infty}$, where $\mathbf H_1\cong \mathbb Q$ and $p$ is a prime number, and let $\mathbf G$ be a group such that $\mathcal G(\mathbf G)\cong\mathcal G(\mathbf H)$. Then $\mathbf G_{<\infty}$ is a central subgroup of $\mathbf G$.
\end{lemma}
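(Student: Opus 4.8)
The plan is to use Lemmas~\ref{all maximal locally cyclic are like Q} and~\ref{maximal locally cyclic onto maximal locally cyclic} to first pin down $\mathbf G_{<\infty}$ as a Pr\"ufer group, and then, using that every infinite-order element of $\mathbf G$ sits inside a subgroup isomorphic to $\mathbb Q$, to show that every such element centralizes $\mathbf G_{<\infty}$.

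First I would fix an isomorphism $\varphi\colon\mathcal G(\mathbf H)\to\mathcal G(\mathbf G)$. As in the proof of Lemma~\ref{maximal locally cyclic onto maximal locally cyclic}, $\varphi$ maps the infinite-order segment of $\mathcal G(\mathbf H)$ onto that of $\mathcal G(\mathbf G)$, hence it restricts to a bijection of $H_{<\infty}$ onto $G_{<\infty}$; being a restriction of $\varphi$, this bijection is an isomorphism of $\mathcal G(\mathbf H)[H_{<\infty}]$ onto $\mathcal G(\mathbf G)[G_{<\infty}]$. Since $\mathbf H_{<\infty}\cong\mathbf C_{p^\infty}$ is locally cyclic, the former graph is the infinite complete graph, and therefore so is the latter. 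Consequently, for any $x,y\in G_{<\infty}$ one of them is a power of the other; in particular $G_{<\infty}$ is closed under multiplication and inverses, so $\mathbf G_{<\infty}$ is a locally cyclic torsion subgroup of $\mathbf G$, and, being the set of all torsion elements, it is characteristic and hence normal in $\mathbf G$. Finally, $\mathbf G_{<\infty}$ is infinite, and two of its elements of distinct prime orders would be non-adjacent in $\mathcal G(\mathbf G)$; thus $\mathbf G_{<\infty}$ is a $q$-group for a single prime $q$, and an infinite locally cyclic $q$-group is a Pr\"ufer group, so $\mathbf G_{<\infty}\cong\mathbf C_{q^\infty}$.

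Since $\mathbf G_{<\infty}$ is abelian, every finite-order element of $\mathbf G$ commutes with each element of $\mathbf G_{<\infty}$, so it suffices to show that every $g\in G_\infty$ centralizes $\mathbf G_{<\infty}$. Fix such $g$. Then $\varphi^{-1}(g)\in H_\infty$ lies in some maximal locally cyclic subgroup of $\mathbf H$, which, having an element of infinite order, is torsion-free (in a locally cyclic group any element together with an infinite-order element generates an infinite cyclic group). So by Lemma~\ref{all maximal locally cyclic are like Q} it is a maximal locally cyclic torsion-free subgroup $\mathbf A$ of $\mathbf H$, and by Lemma~\ref{maximal locally cyclic onto maximal locally cyclic} the set $\mathbf B=\varphi(\mathbf A)$ is a maximal locally cyclic torsion-free subgroup of $\mathbf G$ with $\mathbf B\cong\mathbb Q$, and clearly $g\in B$. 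Now, since $\mathbf G_{<\infty}\unlhd\mathbf G$, conjugation by elements of $\mathbf B$ defines a homomorphism $\beta\colon\mathbf B\to\Aut(\mathbf G_{<\infty})$, whose image is a homomorphic image of $\mathbf B\cong\mathbb Q$, hence divisible. But $\Aut(\mathbf G_{<\infty})\cong\Aut(\mathbf C_{q^\infty})\cong\mathbb Z_q^\times$, the unit group of the ring of $q$-adic integers, which as an abelian group is isomorphic to $\mathbb Z/(q-1)\mathbb Z\times\mathbb Z_q$ (to $\mathbb Z/2\mathbb Z\times\mathbb Z_2$ when $q=2$) and is therefore reduced: it has no nontrivial divisible subgroup. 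Hence $\beta$ is trivial, which means precisely that $g$ centralizes $\mathbf G_{<\infty}$. As $g\in G_\infty$ was arbitrary, $\mathbf G_{<\infty}\leq\Z(\mathbf G)$.

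I expect the main obstacle to be this final step: recognizing that the power-graph hypothesis, funneled through Lemmas~\ref{all maximal locally cyclic are like Q} and~\ref{maximal locally cyclic onto maximal locally cyclic}, forces each infinite-order element of $\mathbf G$ into a divisible ($\cong\mathbb Q$) subgroup, and then using the rigidity of $\Aut(\mathbf C_{q^\infty})$ — namely that it is reduced — to kill the conjugation action. Identifying $\mathbf G_{<\infty}$ with a Pr\"ufer group is more routine, but it should be spelled out carefully, since $\mathbf G$ is not assumed nilpotent and so Lemma~\ref{all finite is central} is not available here.
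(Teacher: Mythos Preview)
Your argument is correct and follows the same overall architecture as the paper: identify $\mathbf G_{<\infty}$ as a Pr\"ufer group from the completeness of the finite-order segment, embed an arbitrary $g\in G_\infty$ into a subgroup $\mathbf B\cong\mathbb Q$ via Lemmas~\ref{all maximal locally cyclic are like Q} and~\ref{maximal locally cyclic onto maximal locally cyclic}, and then show $\mathbf B$ centralizes $\mathbf G_{<\infty}$.

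The one genuine difference is the last step. The paper works one finite-order element $x$ at a time: since $\langle x\rangle$ is characteristic in $\mathbf G_{<\infty}\unlhd\mathbf G$, the $N/C$ lemma gives $[\mathbf L:\mathrm C_{\mathbf L}(\langle x\rangle)]\leq|\Aut(\langle x\rangle)|<\infty$ for $\mathbf L=\mathbf B\mathbf G_{<\infty}$, and then uses that $\mathbb Q$ has no proper subgroup of finite index. You instead push the whole conjugation action through a single homomorphism $\mathbf B\to\Aut(\mathbf C_{q^\infty})$ and kill it by observing that $\Aut(\mathbf C_{q^\infty})\cong\mathbb Z_q^\times$ is reduced while any homomorphic image of $\mathbb Q$ is divisible. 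Both arguments exploit the same underlying fact---$\mathbb Q$ admits no nontrivial homomorphism to a residually finite abelian group---but the paper's version is more elementary, needing only that $\Aut(\mathbf C_{q^n})$ is finite rather than the structure of the $q$-adic units. Your version is cleaner in that it handles all of $\mathbf G_{<\infty}$ at once. A minor stylistic point: your citation of Lemma~\ref{all maximal locally cyclic are like Q} in the sentence ``So by Lemma~\ref{all maximal locally cyclic are like Q} it is a maximal locally cyclic torsion-free subgroup $\mathbf A$'' is superfluous, since you have already argued torsion-freeness directly; the lemma is only needed (implicitly, inside Lemma~\ref{maximal locally cyclic onto maximal locally cyclic}) to get $\mathbf B\cong\mathbb Q$.
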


\begin{proof}
Since $H_{<\infty}$ induces a complete subgraph of $\mathcal G(\mathbf H)$, the finite-order segment $\mathcal G(\mathbf G)$ is also a complete graph. Therefore, $\mathbf G_{<\infty}\cong\mathbf C_{q^\infty}$ for some prime number $q$. Moreover, $\mathbf G_{<\infty}$ is a normal subgroup of $\mathbf G$. Let $a\in G_\infty$. Then there is a maximal locally cyclic subgroup $\mathbf A$ of $\mathbf H$ which contains $\varphi^{-1}(a)$. Now, by Lemma \ref{maximal locally cyclic onto maximal locally cyclic}, $\mathbf B=\varphi(\mathbf A)$ is a maximal locally cyclic subgroup of $\mathbf G$. Furthermore, by Lemmas \ref{all maximal locally cyclic are like Q} and \ref{maximal locally cyclic onto maximal locally cyclic}, it follows that $\mathbf B\cong\mathbb Q$. Since $\mathbf G_{<\infty}\unlhd \mathbf G$, we have that $\mathbf L=\mathbf B\mathbf G_{<\infty}\leq\mathbf G$. Let $x$ be an element of $\mathbf G_{<\infty}$, and let $o(x)=q^n$. By $N/C$ Lemma (see \cite[Lemma 7.1]{rotman}), $\mathrm N_{\mathbf L}(\langle x\rangle)/\mathrm C_{\mathbf L}(\langle x\rangle)$ can be embedded into $\Aut(\langle x\rangle)$. Notice that $\Aut(\langle x\rangle)$ is finite and $\mathrm N_{\mathbf L}(\langle x\rangle)=\mathbf L$. Therefore,
\[[\mathbf B:\mathbf B\cap\mathrm C_{\mathbf L}(\langle x\rangle)]\leq [\mathbf L:\mathrm C_{\mathbf L}(\langle x\rangle)]<\infty.\]
However, $\mathbb Q$ has no subgroup of finite index. Thus, $\mathbf B\cap \mathrm C_{\mathbf L}(\langle x\rangle)=\mathbf B$, which implies that $\mathbf B\leq\mathrm C_{\mathbf L}(\langle x\rangle)$. Therefore, $a\in\mathrm C_{\mathbf G}(x)$ for every $x\in\mathbb G_{<\infty}$. It follows that $\mathbf G_{<\infty}\leq\mathrm Z(\mathbf G)$. This proves the lemma.
\end{proof}

Now, we shall prove the main result of this section.

\begin{theorem}
Let $\mathbf G$ be a group, and let $\mathcal G(\mathbb Q\times\mathbf C_{p^\infty})\cong\mathcal G(\mathbf G)$, where $p$ is a prime number. Then $\mathbf G\cong\mathbb Q\times \mathbf C_{p^\infty}$. 
\end{theorem}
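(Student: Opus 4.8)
The plan is to build inside $\mathbf G$ a subgroup isomorphic to $\mathbb Q\times\mathbf C_{p^\infty}$ and then to prove that it exhausts $\mathbf G$. Fix a graph isomorphism $\varphi\colon\mathcal G(\mathbb Q\times\mathbf C_{p^\infty})\to\mathcal G(\mathbf G)$, write $\mathbf H=\mathbb Q\times\mathbf C_{p^\infty}$, and let $\mathbf A_0=\mathbb Q\times\{e\}$, so that $\mathbf H=\mathbf A_0\times\mathbf C_{p^\infty}$ by Lemma~\ref{all maximal locally cyclic are like Q}. By Lemma~\ref{torsion part is a central subgroup}, $\mathbf G_{<\infty}$ is a central subgroup of $\mathbf G$ isomorphic to $\mathbf C_{q^\infty}$ for some prime $q$; being central and containing every finite-order element, it is the torsion subgroup of $\mathbf G$, so $\mathbf G/\mathbf G_{<\infty}$ is torsion-free, and since every infinite-order element lies in a maximal locally cyclic subgroup (necessarily torsion-free) the group $\mathbf G$ is divisible. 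By Lemma~\ref{maximal locally cyclic onto maximal locally cyclic}, $\mathbf B_0:=\varphi(\mathbf A_0)$ is a maximal locally cyclic torsion-free subgroup of $\mathbf G$ with $\mathbf B_0\cong\mathbb Q$, and $\varphi$ restricts to an isomorphism of the infinite-order segments of $\vec{\mathcal G}(\mathbf H)$ and $\vec{\mathcal G}(\mathbf G)$, carrying the maximal locally cyclic subgroups meeting the infinite-order part of one onto those of the other. As $\mathbf B_0$ is torsion-free and $\mathbf G_{<\infty}$ central, $\mathbf L:=\mathbf B_0\mathbf G_{<\infty}=\mathbf B_0\times\mathbf G_{<\infty}$ is a subgroup of $\mathbf G$ isomorphic to $\mathbb Q\times\mathbf C_{q^\infty}$; the aim is to show $\mathbf G=\mathbf L$ and $q=p$.

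Next I would transport along $\varphi$ the structure of $\mathbf H$ that already lives in the directed power graph of its infinite-order segment. Two infinite-order elements lie in a common locally cyclic subgroup exactly when they have a common power, and the $p$-th roots of an infinite-order element of $\mathbf H$ are exactly the prime roots of which there are $p$, pairwise contained in no common locally cyclic subgroup (this is the import of Lemmas~\ref{p many p-th roots}, \ref{one q-th root} and \ref{numbers of prime roots}, used as in Part~2 of the proof of Theorem~\ref{Prufer groups are the only abelian groups with the property}); hence ``lying in a common maximal locally cyclic torsion-free subgroup'' and ``$z$ is a $p$-power of $x$'' are directed-power-graph notions. Since $\mathbf H=\mathbf A_0\times\mathbf C_{p^\infty}$, every infinite-order $x\in H$ has $x^{p^j}\in\mathbf A_0$ for some $j$; transporting this — where the counts of roots on the two sides must match, which already forces $q=p$ once $\mathbf G$ is known to be nilpotent — every infinite-order $g\in G$ has $g^{p^j}\in\mathbf B_0$ for some $j\ge0$. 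From $g^{p^j}\in\mathbf B_0\cong\mathbb Q$, using that $\mathbf B_0$ is uniquely divisible while $\mathbf G_{<\infty}\cong\mathbf C_{p^\infty}$ carries only $p$-power torsion, one extracts a $p^j$-th root of $g^{p^j}$ inside $\mathbf B_0$ which, together with $g$, generates an abelian subgroup; hence $g\in\mathbf B_0\mathbf G_{<\infty}=\mathbf L$, and with $G_{<\infty}\subseteq\mathbf L$ this gives $\mathbf G=\mathbf L\cong\mathbb Q\times\mathbf C_{p^\infty}$.

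The real obstacle is that everything in the last two sentences — the identification of the root counts, the applicability of Lemmas~\ref{p many p-th roots}--\ref{numbers of prime roots} to $\mathbf G$, and the passage ``$g^{p^j}\in\mathbf B_0\Rightarrow g\in\mathbf L$'' — hinges on $\mathbf G$ being nilpotent, equivalently (once the rest is in place) abelian of torsion-free rank $1$, and this is not handed over by the structural bookkeeping alone: a torsion-free divisible group in which every element has a nonzero power in a fixed copy of $\mathbb Q$ need not coincide with that copy without further input. The route I would take is to prove that any two maximal locally cyclic torsion-free subgroups of $\mathbf G$ centralize each other — the $N/C$-lemma argument of Lemma~\ref{torsion part is a central subgroup} applies, once normalization is known, because $\Aut(\mathbb Q)\cong\mathbb Q^{*}$ has no nontrivial divisible subgroup, and one hopes to get the normalization from the (transported) fact that in $\mathbf H$ two such subgroups with nontrivial intersection generate an abelian subgroup. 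Since these subgroups cover $G_\infty$ and $\mathbf G_{<\infty}$ is central, $\mathbf G$ is then abelian, so $\mathbf G/\mathbf G_{<\infty}$ is a $\mathbb Q$-vector space in which every element lies on the single line $\mathbf B_0\mathbf G_{<\infty}/\mathbf G_{<\infty}$, hence equals it; thus $\mathbf G$ is divisible abelian of torsion-free rank $1$ with torsion subgroup $\mathbf C_{p^\infty}$, i.e. $\mathbf G\cong\mathbb Q\times\mathbf C_{p^\infty}$. (As a consistency check, torsion-free rank $>1$ would disconnect the infinite-order segment of $\mathcal G(\mathbf G)$ — every neighbour of an infinite-order vertex is a power or a root of it, so a connected component stays in a single rank-one direction — whereas the infinite-order segment of $\mathcal G(\mathbf H)$ is connected.) Pinning down the normalization claim, and verifying that the ``$p$-power'' relation is transported correctly by $\varphi$, are the steps that will demand the most care.
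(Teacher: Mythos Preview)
Your setup is right and you have honestly flagged where you are stuck: you cannot establish that $\mathbf G$ is abelian as a preliminary step, and your proposed route via mutual centralization of the maximal locally cyclic subgroups has no mechanism for the normalization claim, since $\varphi$ does not transport statements of the form ``these two subgroups generate an abelian group''. The paper bypasses this obstacle entirely --- abelianness of $\mathbf G$ is a \emph{consequence} of $\mathbf G=\mathbf L$, not a prerequisite --- via the two moves below, neither of which requires Lemmas~\ref{p many p-th roots}--\ref{numbers of prime roots} to be applied to $\mathbf G$.

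First, $p=q$ needs only Lemma~\ref{torsion part is a central subgroup}, not nilpotency of $\mathbf G$: since $\varphi$ restricts to an isomorphism of the infinite-order directed segments, every $g\in G_\infty$ has exactly $p$ noncommon prime roots; centrality of $\mathbf G_{<\infty}\cong\mathbf C_{q^\infty}$ then shows directly that the noncommon prime roots of $g$ are exactly its $q$-th roots (if $u^q=g$ and $c\in C_q\setminus\{e\}$ then $(uc)^q=g$ too, so $q$-th roots are noncommon; a short directed-power-graph comparison with $\mathbf H$ rules out noncommon $r$-th roots for $r\neq q$). Thus the $q$-th roots of $g$ number exactly $p$ and form a union of $\mathbf C_q$-cosets, so $q\mid p$ and $p=q$. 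Second, once $p=q$, your $\mathbf L\cong\mathbb Q\times\mathbf C_{p^\infty}$ is \emph{root-closed in $\mathbf G$}: it already contains $p$ $p$-th roots and one $r$-th root ($r\neq p$) of each $t\in L_\infty$, and the counts just established say $t$ has no further roots in $\mathbf G$ (for $r\neq p$ an $r$-th root is common, hence lies in every maximal locally cyclic subgroup through $t$, so is unique). Hence $L_\infty$ is closed under adjacency in the connected infinite-order segment of $\mathcal G(\mathbf G)$, giving $L_\infty=G_\infty$ and $\mathbf G=\mathbf L$. Your ``consistency check'' at the end is in fact the decisive closing step, and it applies without any abelian hypothesis once root-closedness is in hand.
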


\begin{proof}
In order to avoid any ambiguity and have all the groups with multiplicative notation, let us prove that $\mathcal G(\mathbf H_1\times\mathbf C_{p^\infty})\cong\mathcal G(\mathbf G)$ if and only if $\mathbf G\cong\mathbf H_1\times \mathbf C_{p^\infty}$ for $\mathbf H_1\cong\mathbb Q$. Also, let $\mathbf H$ denote $\mathbf H_1\times\mathbf C_{p^\infty}$.

Let $\varphi:H\rightarrow G$ be an isomorphism from $\mathcal G(\mathbf H)$ to $\mathcal G(\mathbf G)$. By \cite[Lemma 4]{zahirovic-2}, $\varphi$ maps the finite-order segment of $\mathcal G(\mathbf H)$ onto the finite-order segment of $\mathcal G(\mathbf G)$. Therefore, the subgraph of $\mathcal G(\mathbf G)$ induced by $G_{<\infty}$ is complete, and thus, $\mathbf G_{<\infty}\cong\mathbf C_{q^\infty}$ for a prime number $q$. 

Let us show now that $\varphi\rvert_{H_\infty}$ is an isomorphism between the infinite-order segments of $\vec{\mathcal G}(\mathbf H)$ and $\vec{\mathcal G}(\mathbf G)$. Notice that there are elements $x, x_1, x_2\in H_{\infty}$, $x_1\neq x_2$, such that $(x_1)^p=(x_2)^p=x$. Then $\overline N(x_1)\cap\overline N(x_2)=  O(x_1)\cap O(x_2)=\langle x\rangle\setminus\{e\}$. Therefore, the subgraph of $\overline{\mathcal G(\mathbf H)}$ induced by $\overline N(x_1)\cap\overline N(x_2)$ has only one nontrivial connected component, and $x$ and $x^{-1}$ are its only isolated vertices. Thus, by \cite[Proposition 7]{zahirovic-2}, $\varphi\rvert_{H_\infty}$ is an isomorphism between the infinite-order segments of the directed power graphs of $\mathbf H$ and $\mathbf G$. %\footnote{This will maybe require some additional explaining.}
 It follows that $\varphi$ maps maximal locally cyclic subgroups of $\mathbf H$ onto maximal locally cyclic subgroups of $\mathbf G$.

For all $x,y\in H_\infty$, since $\varphi\rvert_{H_\infty}$ is an isomorphism between $\vec{\mathcal G}(\mathbf H)[H_\infty]$ and $\vec{\mathcal G}(\mathbf G)[G_\infty]$, as discussed in the proof of Theorem \ref{Prufer groups are the only abelian groups with the property}, $[y]_\equiv$ contains a prime root of $x$ if and only if $[\varphi(y)]_\equiv$ contains a prime root of $\varphi(x)$. We will say that $y$ is a common prime root of $x$ if $y$ belongs to all maximal locally cyclic subgroups containing $x$. Since $\varphi$ maps the set of all maximal locally cyclic subgroups of $\mathbf H$ onto the set of all maximal cyclic subgroups of $\mathbf G$, $[y]_\equiv$ contains a common prime root of $x$ if and only if $[\varphi(y)]_\equiv$ contains a common prime root of $\varphi(x)$ for all $x,y\in H_\infty$. By Lemma \ref{all maximal locally cyclic are like Q}, for each prime number $r$, any maximal locally cyclic subgroup of $\mathbf H$ contains an $r$-th root of each of its elements. Furthermore, notice that any prime root of an infinite-order element $x$ of $\mathbf H$ is common if and only if it is not a $p$-th root of $x$. In the remainder of this proof, we will show that $p=q$.

Let $x\in H_\infty$. Then $x$ has $p$ many noncommon prime roots. Moreover, for every noncommon prime root $y$ of $x$, we have that $y\rightarrow x\rightarrow x^p$ and there is no element $z\not\in[y]_\equiv\cup[x]_\equiv\cup[x^p]_\equiv$ such that $y\rightarrow z\rightarrow x^p$. Since $\vec{\mathcal G}(\mathbf H)[H_\infty]\cong\vec{\mathcal G}(\mathbf G)[G_\infty]$ and every $\equiv$-class contains at most one prime root of $x$, $\varphi(x)$ also has $p$ many noncommon prime roots. Notice that, by Lemma \ref{torsion part is a central subgroup}, all $q$-th roots of $\varphi(x)$ are noncommon roots of $\varphi(x)$ in $\mathbf G$. Therefore, $\varphi(x^p)\in[(\varphi(x))^q]_\equiv$. Furthermore, if $\varphi(x)$ had any noncommon prime root which is an $r$-th root $y$, $r\neq q$, then we would have $y\rightarrow y^{q}\rightarrow (\varphi(x))^q$ and $y^q\not\in [y]_\equiv\cup[\varphi(x)]_\equiv\cup[(\varphi(x))^q]_\equiv$, which would be a contradiction because $\varphi$ is an isomorphism between $\vec{\mathcal G}(\mathbf H)[H_\infty]$ and $\vec{\mathcal G}(\mathbf G)[G_\infty]$. Thus, all noncommon prime roots of $\varphi(x)$ are exactly all $q$-th roots.

By Lemma \ref{torsion part is a central subgroup}, $\mathbf G_{<\infty}\cong\mathbf C_{q^\infty}$ is a central subgroup of $\mathbf G$. Let $\mathbf C_q$ be the unique subgroup of $\mathbf G$ of order $q$. Then, if $u$ is a $q$-th root of $\varphi(x)$, so are all elements of the coset $uC_p$. Therefore, the number of all noncomon prime roots of $\varphi(x)$ is $kq$ for some $k\in\mathbf N$. Since $p=kq$, and since $p$ is a prime number, we have obtained that $p=q$.

It remains for us to show that $\mathbf G$ and $\mathbf H$ are isomorphic. Let $\mathbf A$ be a maximal torsion-free locally cyclic subgroup of $\mathbf G$. Then $\varphi^{-1}(\mathbf A)$ is a maximal torsion-free locally cyclic subgroup of $\mathbf G$, which by Lemma \ref{all maximal locally cyclic are like Q}, implies that $\varphi^{-1}(\mathbf A)\cong\mathbb Q$. Therefore, by Lemma \ref{maximal locally cyclic onto maximal locally cyclic}, $\mathbf A\cong\mathbb Q$. Let $\mathbf T=\mathbf A\mathbf G_{<\infty}$. Since $\mathbf T\cong\mathbb Q\times\mathbf C_{p^\infty}$ contains all roots of each of its elements, and since the infinite-order segment of $\mathcal G(\mathbf G)$ is a connected graph, it follows that $T_\infty=G_\infty$. Therefore, $\mathbf G=\mathbf T\cong\mathbf H$. This completes our proof.
%
%Consider $\mathbf T_1=\varphi(\mathbf A_1) \mathbf G_{<\infty}=\varphi(\mathbf A_1)\times \mathbf G_{<\infty}$. By Lemma \ref{maximal locally cyclic onto maximal locally cyclic}, $\varphi(\mathbf A_1)$ is maximal locally cyclic subgroup of both $\mathbf G$ and $\mathbf T_1$, and $\varphi(\mathbf A_1)\cong\mathbb Q$. Therefore, $x$ has $q$ many noncommon prime roots in $\mathbf T_1$. Suppose that there is a maximal locally cyclic subgroup $\mathbf B_2$ of $\mathbf G$ which contains a noncommon root $z$ of $x$ not contained in $\mathbf T_1$. Let $\mathbf T_2 = \mathbf B_2\mathbf G_{<\infty} = \mathbf B_2\times \mathbf G_{<\infty}$; notice that $\mathbf T_2$ also contains exactly $q$ many noncommon prime roots of $x$. Suppose further that there is a noncommon root $y$ of $x$ contained in $\mathbf T_1\cap\mathbf T_2$. Since $x=(x_1,x_2)$, $y=(y_1,y_2)$ and $z=(z_1,z_2)$ for some $x_1,y_1,z_1\in B_2$ and $x_2,y_2,z_2\in G_{<\infty}$, and since $y^p=z^p=x$, then $y_1=z_1$, and thus, $z=yg$ for an element $g\in G_{<\infty}$. It follows that $z\in T_1G_{<\infty}=T_1$, which is a contradiction. Therefore, if $\mathbf T_1$ does not contain all noncommon prime roots of $x$, then the number of all noncomon prime roots of $x$ is $kq$ for some $k>1$; that would imply $kq=p$, which is a contradiction. It follows that $\mathbf T_1=\mathbf G$ and $p=q$. Therefore, groups $\mathbf H$ and $\mathbf G$ are isomorphic, which proves the theorem.
\end{proof}

%\section*{Acknowledgment}
%
%The authors acknowledge financial support of the Ministry of Education, Science and Technological Development of the Republic of Serbia (Grant No. 451-03-68/2020-14/200125).

\vbox{
\vbox{\noindent
Sayyed Heidar Jafari

Shahrood University of Technology, Faculty of Mathematical Sciences, Iran

{\it e-mail}: \href{mailto:shjafari55@gmail.com}{shjafari55@gmail.com}}\

\vbox{\noindent
Samir Zahirovi\' c

University of Novi Sad, Department of Mathematics and Informatics, Serbia

{\it e-mail}: \href{mailto:samir.zahirovic@dmi.uns.ac.rs}{samir.zahirovic@dmi.uns.ac.rs}}}


\begin{thebibliography}{99}\addcontentsline{toc}{chapter}{References}
\bibitem{aalipour-et-al} G. Aalipour, S. Akbari, P.J. Cameron, R. Nikandish, F. Shaveisi, {\it On the structure of the power graph and the enhanced power graph of a group}, Electron. J. Combin. {\bf 24} (2017), no. 3, Paper 3.16, 18 pp.
\bibitem{survey-kelarev} J. Abawajy, A.V. Kelarev, M. Chowdhury, {\it Power graphs: a survey}, Electronic Journal of Graph Theory and Applications {\bf 1} (2013), no. 2, 125-147.
%%\bibitem{brauer i foler} R. Brauer, K. A. Fowler, {\it On groups of even order}, The Annals of Mathematics 62 (3) (1955), 567-583.
\bibitem{cameron-graphs-defined-on-groups} P.J. Cameron, Graphs defined on groups, International Journal of Group Theory (2021), doi: \href{http://dx.doi.org/110.22108/ijgt.2021.127679.1681}{110.22108/ijgt.2021.127679.1681}
\bibitem{cameron} P.J. Cameron, {\it The power graph of a finite group, II}, J. Group Theory {\bf 13} (2010), no. 6, 779-783.
\bibitem{cameron-ghosh} P.J. Cameron, S. Ghosh, {\it The power graph of a finite group}, Discrete Math. {\bf 311} (2011), no. 13, 1220-1222.
\bibitem{cameron-guerra-jurina} P.J. Cameron, H. Guerra, \v S. Jurina, {\it The power graph of a torsion-free group}, Journal of Algebraic Combinatorics {\bf 49} (2019), no. 1, 83-98.
\bibitem{cameron-jafari} P.J. Cameron, S.H. Jafari, {\it On the connectivity and independence number of power graphs of groups}, Graphs and Combinatorics {\bf 36} (2020), no. 3, 895-904.
\bibitem{cameron-manna-mehatari} P.J. Cameron, P. Manna, R. Mehatari, {\it Forbidden subgraphs of power graphs}, Electron. J. Combin. {\bf 28} (2021), no. 3, Paper 3.4, 14 pp.
\bibitem{cakrabarti} I. Chakrabarty, S. Ghosh, M.K. Sen, {\it Undirected power graphs of semigroups}, Semigroup Forum {\bf 78} (2009), 410-426.
%\bibitem{strong perfect graph} M. Chudnovsky, N. Robertson, P. Seymour, R. Thomas, {\it The strong perfect graph theorem},  Ann. of Math. (2) 164 (2006), no. 1, 51-229.
%%\bibitem{feng ma i vang} M. Feng, X. Ma, K. Wang, {\it The structure and metric dimension of the power graph of a finite group}, European Journal of Combinatorics 43 (2015), 82-97.
%\bibitem{dalal-kumar} S. Dalal, J. Kumar, {\it Chromatic number of the cyclic graph of infinite semigroup}, Graphs and Combinatorics {\bf 36} (2020), no. 1, 109–113.
%\bibitem{hamzeh2} A. Hamzeh, A.R. Ashrafi, {\it Automorphism groups of supergraphs of the power graph of a finite group}, European J. Combin. {\bf 60} (2017), 82-88.
\bibitem{doostabadi-ghouchan} A. Doostabadi, M.F.D. Ghouchan, {\it On the Connectivity of Proper Power Graphs of Finite Groups}, Communications in Algebra {\bf 43} (2015), no. 10, 4305-4319.
\bibitem{feng-ma-wang} M. Feng, X. Ma, K. Wang, {\it The structure and metric dimension of the power graph of a finite group}, European Journal of Combinatorics {\bf 43} (2015), 82-97.
\bibitem{jafari-1} S.H. Jafari, {\it Some properties of power graphs in finite group}, Asian-European Journal of Mathematics {\bf 9} (2016), no. 4, 1650079
\bibitem{jafari-2} S.H. Jafari, {\it The number of edges in power graph of finite groups}, Asian-European Journal of Mathematics {\bf 14} (2021), no. 3, 2150037
\bibitem{jafari-chattopadhyay} S.H. Jafari, S. Chattopadhyay, {\it Spectrum of proper power graphs of the direct product of certain finite groups}, Linear and Multilinear Algebra (2021), doi:\href{https://doi.org/10.1080/03081087.2021.1918051}{10.1080/03081087.2021.1918051}
\bibitem{kelarev-quinn-1} A.V. Kelarev, S.J. Quinn, {\it A combinatorial property and power graphs of groups}, Contributions to general algebra {\bf 12} (2000), 229-235.
\bibitem{kelarev-4} A.V. Kelarev, S.J. Quinn, {\it A combinatorial property and power graphs of semigroups}, Commentationes Mathematicae Universitatis Carolinae {\bf 45} (2004), no. 1, 1-7.
\bibitem{kelarev-quinn-2} A.V. Kelarev, S.J. Quinn, {\it Directed graph and combinatorial properties of semigroups}, Journal of Algebra {\bf 251} (2002), no. 1, 16-26.
\bibitem{kelarev-3} A.V. Kelarev, S.J. Quinn, R. Smolikova, {\it Power graphs and semigroups of matrices}, Bulletin of the Australian Mathematical Society {\bf 63} (2001), no. 2, 341-344.
\bibitem{survey-cameron} A. Kumar, L. Selvaganesh, P.J. Cameron, T. Tamizh Chelvam, {\it Recent developments on the power graph of finite groups - a survey}, AKCE International Journal of Graphs and Combinatorics (2021), doi: \href{https://doi.org/10.1080/09728600.2021.1953359}{10.1080/09728600.2021.1953359}
%\bibitem{kurosh} A.G. Kurosh, {\it The Theory of Groups}, 2nd English edn (transl. K.A. Hirsch), Chelsea, New York (1960)
\bibitem{ma-feng} X. Ma, M. Feng, {\it On the chromatic number of the power graph of a finite group}, Indagationes Mathematicae {\bf 26} (2015), no. 4, 626-633.
\bibitem{ma-feng-wang-1} X. Ma, M. Feng, K. Wang, {\it The Rainbow Connection Number of the Power Graph of a Finite Group}, Graphs and Combinatorics {\bf 32} (2016), 1495–1504
\bibitem{ma-feng-wang-2} X. Ma, M. Feng, K. Wang, {\it The strong metric dimension of the power graph of a finite group}, Discrete Applied Mathematics
{\bf 239} (2018), 159-164.
\bibitem{pourghobadi-jafari} K. Pourghobadi, S.H. Jafari, {\it The diameter of power graphs of symmetric groups}, Journal of Algebra and Its Applications {\bf 17} (2018), no. 12, 1850234
\bibitem{shitov} Y. Shitov, {\it Coloring the power graph of a semigroup}, Graphs Combin. {\bf 33} (2017), no. 2, 485-487.
\bibitem{robinson} Derek J.S. Robinson, {\it A Course in the Theory of Groups}, Second Edition, Springer-Verlag, 1996
\bibitem{rotman} J.J. Rotman, {\it An Introduction to the Theory of Groups}, Fourth Edition, Springer-Verlag, 1994
%%\bibitem{nojman} B.H. Neumann, A problem of Paul Erdos on groups, 1976.
%%\bibitem{fischer} B. Fischer, Finite Groups Generated by 3-Transpositions, I, 1971.
\bibitem{zahirovic-1} S. Zahirovi\' c, {\it The power graph of a torsion-free group of nilpotency class $2$}, Journal of Algebraic Combinatorics (2021), doi:\href{https://doi.org/10.1007/s10801-021-01067-1}{10.1007/s10801-021-01067-1}
\bibitem{zahirovic-2} S. Zahirovi\' c, {\it The power graph of a torsion-free group determines the directed power graph}, Discrete Applied Mathematics {\bf 305} (2021), 109-118.















\end{thebibliography}
\end{document}